\DeclarePairedDelimiter{\ceil}{\lceil}{\rceil}
\definecolor{linkred}{RGB}{199,21,133}
\definecolor{linkblue}{RGB}{16, 78, 139}
	\titlespacing{\section}{0pt}{12pt}{0pt}
	\titlespacing{\subsection}{0pt}{6pt}{0pt}
\long\def\@footnotetext#1{%
\H@@footnotetext{%
\ifHy@nesting 
\hyper@@anchor{\@currentHref}{#1}%
\else 
\Hy@raisedlink{\hyper@@anchor{\@currentHref}{\relax}}#1%
\fi 
}}
\def\@footnotemark{%
\leavevmode 
\ifhmode\edef\@x@sf{\the\spacefactor}\nobreak\fi 
\H@refstepcounter{Hfootnote}%
\hyper@makecurrent{Hfootnote}%
\hyper@linkstart{link}{\@currentHref}%
\@makefnmark 
\hyper@linkend 
\ifhmode\spacefactor\@x@sf\fi 
\relax 
}%
\renewcommand*\@footnotemark{%
\leavevmode 
\ifhmode 
\edef\@x@sf{\the\spacefactor}%
\FN@mf@check 
\nobreak 
\fi 
\H@refstepcounter{Hfootnote}%
\hyper@makecurrent{Hfootnote}%
\hyper@linkstart{link}{\@currentHref}%
\@makefnmark 
\hyper@linkend 
\ifFN@pp@towrite 
\FN@pp@writetemp 
\FN@pp@towritefalse 
\fi 
\FN@mf@prepare 
\ifhmode\spacefactor\@x@sf\fi 
\relax%
}%
\theoremstyle{plain}
\newtheorem{theorem}{Theorem}[section]
\newtheorem{lemma}[theorem]{Lemma}
\newtheorem{corollary}[theorem]{Corollary}
\theoremstyle{definition}
\newtheorem{remark}[theorem]{Remark}
\newcommand{\sys}{{\rm sys}}
\newcommand{\N}{{\mathbb N}}
\newcommand{\Z}{{\mathbb Z}}
\newcommand{\CC}{{\mathcal C}}
\newcommand{\G}{{\mathcal G}}
\long\def\symbolfootnote[#1]#2{\begingroup%
\def\thefootnote{\fnsymbol{footnote}}\footnote[#1]{#2}\endgroup}
\def\blfootnote{\xdef\@thefnmark{}\@footnotetext}
\begin{document}

{\Large \bfseries Short closed geodesics with self-intersections}

{\large Viveka Erlandsson and Hugo Parlier\symbolfootnote[1]{\normalsize Research of both authors supported by Swiss National Science Foundation grant number PP00P2\textunderscore 153024 \\
{\em 2010 Mathematics Subject Classification:} Primary: 32G15. Secondary: 30F10, 30F45, 53C22. \\
{\em Key words and phrases:} closed geodesics, hyperbolic surfaces.}
}

{\bf Abstract.} 
Our main point of focus is the set of closed geodesics on hyperbolic surfaces. For any fixed integer $k$, we are interested in the set of all closed geodesics with at least $k$ (but possibly more) self-intersections. Among these, we consider those of minimal length and investigate their self-intersection numbers. We prove that their intersection numbers are upper bounded by a universal linear function in $k$ (which holds for any hyperbolic surface). Moreover, in the presence of cusps, we get bounds which imply that the self-intersection numbers behave asymptotically like $k$ for growing $k$.

\vspace{1cm}

\section{Introduction} \label{s:introduction}

Closed geodesics play an important part in describing the geometry and dynamics of hyperbolic surfaces and their moduli. In particular, the length spectrum of a hyperbolic surface is closely related to analytic problems on surfaces as it determines the spectrum of the Laplacian. Among the closed curves, the simple ones play a particular role and are related to geometric and topological problems on moduli spaces including the study of homeomorphism groups and metrics on Teichm\"uller space. 

Among all closed geodesics, the shortest one is somewhat special and is called the systole of the surface. Unless a hyperbolic surface $X$ (with non-trivial fundamental group of finite type) is homeomorphic to a thrice punctured sphere, its systole is a simple closed geodesic. With this in mind, we are interested in the following problem introduced and studied by Basmajian and Buser. Given a fixed integer $k$, we consider the set of closed geodesics of $X$ that self-intersect at least $k$ times. Since the length spectrum is discrete, among them there is one of minimal length, say $\gamma$. By definition, $\gamma$ self-intersects at least $k$ times. The question is to find an upper bound on the number of self-intersection points of $\gamma$. 

As mentioned before, for $k=0$, this is asking for the number of self-intersections of the systole of $X$ and so unless $X$ contains no simple closed geodesics, the answer is $0$. For $k=1$, Buser \cite[Theorem 4.2.4]{BuserBook} solved the problem by showing that among all non-simple closed geodesics of $X$, the shortest one has a single intersection point (it is a so-called {\it figure eight geodesic}). The proof is an involved cut and paste type argument based on the observation that a non-simple closed geodesic contains a simple loop as a subset. Perhaps surprisingly, as far as exact values go, there are no further results known. 

A general result, due to Basmajian \cite{BasmajianStable}, provides a first answer to the question in the case where $X$ is complete, finite area and finite type. He shows that there exists a constant (that can be made explicit) which depends on $k$ and the topology of $X$ (but not its geometry) such that the number of self-intersections of $\gamma$ is upper bounded by this constant. If one works out the explicit bound, the dependence on $k$ is exponential. The bound on the topology is used to bound the lengths of curves in a pair of pants decomposition via a theorem of Bers \cite{Bers}, quantified by Buser and others \cite{BalacheffParlier,BuserBook,ParlierShort}. For general surfaces (those not necessarily of finite area), the methods proposed by Basmajian provide a bound which this time depends on the geometry of the surface, and in particular on a bound on the length of curves in a pants decomposition.

Let $I_k(X)$ denote the maximum number of self-intersections of a shortest geodesic on $X$ with at least $k$ self-intersections. We prove the following:

\begin{theorem}\label{thm:universal}
Let $X$ be an orientable complete hyperbolic surface with non-abelian fundamental group. Then 
$$I_k(X)\leq31\sqrt{k+\frac{1}{4}}\left(16\sqrt{k+\frac{1}{4}}+1\right)$$
\end{theorem}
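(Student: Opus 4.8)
The plan is to bound $I_k(X)$ by comparing the length of the minimizing geodesic $\gamma$ to the number of self-intersections it can possibly have, exploiting the fact that $\gamma$ is a \emph{length minimizer} among all geodesics with at least $k$ self-intersections.

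The strategy rests on two opposing estimates. First I would establish an \emph{upper bound on the length} of $\gamma$. Since $\gamma$ is the shortest geodesic with at least $k$ self-intersections, its length is at most the length of any explicitly constructed geodesic achieving $k$ self-intersections. Because $X$ has non-abelian fundamental group, it contains a pair of pants or, more robustly, a figure-eight geodesic (a shortest non-simple one, as in Buser's result for $k=1$); by taking suitable powers or concatenations one builds a curve whose self-intersection number grows like $k$ while its length grows only like $\sqrt{k}$ times a universal constant. Concretely, a geodesic that winds around a fixed short loop $n$ times creates on the order of $n^2$ self-intersections with length on the order of $n$, so to force $k$ intersections one needs length on the order of $\sqrt{k}$. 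This yields $\length(\gamma)\leq C\sqrt{k+\tfrac14}$ for a universal constant $C$.

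Second I would prove a \emph{lower bound relating length and self-intersection number}: any closed geodesic with $i$ self-intersections on a hyperbolic surface must have length bounded below in terms of $i$. The standard tool here is that a geodesic with many self-intersections must spend a definite amount of length near each crossing — one extracts disjoint (or controlled-overlap) pieces of definite length around the self-intersection points using the collar lemma and lower bounds on the injectivity radius along the geodesic, or alternatively a counting argument on a net of the geodesic. The cleanest form says $i(\gamma)\leq a\,\length(\gamma)^2 + b\,\length(\gamma)$ for universal constants, i.e.\ self-intersections grow at most quadratically in length. Combining this quadratic growth with the upper bound $\length(\gamma)\leq C\sqrt{k+\tfrac14}$ gives $I_k(X)=i(\gamma)\leq a\,C^2(k+\tfrac14)+bC\sqrt{k+\tfrac14}$, and after tracking the explicit constants one recovers the stated form $31\sqrt{k+\tfrac14}\bigl(16\sqrt{k+\tfrac14}+1\bigr)$.

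The main obstacle I expect is the \emph{universality} of the constant, independent of the geometry of $X$. The quadratic-in-length intersection bound is delicate when the surface has very short geodesics or deep cusps, since the injectivity radius can be arbitrarily small and the naive collar argument degrades. The key idea to circumvent this is that $\gamma$ being a \emph{minimizer} constrains it to sit in a thick part or to avoid wasting length in thin regions: a length-minimizing self-intersecting geodesic cannot profitably travel far into a collar without accumulating length disproportionate to the intersections gained. Making this trade-off precise — showing that the optimal configuration of self-intersections per unit length is realized in bounded-geometry regions, so that the worst case is governed by a universal area-versus-intersection count rather than by the global geometry — is the heart of the argument and where the explicit numerical constants $16$ and $31$ must be extracted.
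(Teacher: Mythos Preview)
Your outline has the right overall architecture but contains a genuine gap at precisely the point you flag as the main obstacle, and your proposed fix is not the one that actually works.

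First, a correction to your length upper bound: Basmajian's inequality reads $\ell(\gamma)\leq 2C_8(X)\sqrt{k+\tfrac14}$, where $C_8(X)$ is the length of the shortest figure-eight geodesic on $X$. This constant is \emph{not} universal; $C_8(X)$ can be arbitrarily large or arbitrarily small. So your claimed universal $C$ does not exist.

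Second, and more seriously, there is no universal quadratic bound $i(\gamma)\leq a\,\ell(\gamma)^2+b\,\ell(\gamma)$. The best available is $i(\gamma)\leq a(X)\,\ell(\gamma)^2$ with $a(X)\to\infty$ as $X$ degenerates (this is Basmajian's lower length bound; Theorem~\ref{thm:thick} of the paper makes the $\varepsilon$-dependence explicit). Combining these two $X$-dependent inequalities yields only $I_k(X)\leq U(X)\,k$ with $U(X)\to\infty$ near the boundary of moduli space --- the paper states exactly this in Section~\ref{sec:setup}.

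The mechanism that makes the bound universal is different from what you sketch. The paper cuts $\gamma$ into segments of length $r_0=C_8(X)/8$, a scale \emph{adapted to $X$}. Because both $\ell(\gamma)$ and $r_0$ scale with $C_8(X)$, the \emph{number} $m$ of segments is universally bounded by $16\sqrt{k+\tfrac14}+1$; the $X$-dependence cancels. At this scale every ball of radius $r_0$ is a disk or a cylinder (Corollary~\ref{cor:balls}), so two segments either meet at most once or lie together in a cylinder. The cylinder case is where minimality enters, but not via a thick-part confinement: the argument is an explicit \emph{unwinding surgery}. If some segment winds enough in a cylinder, one removes a single loop to obtain a shorter geodesic $\gamma'$, which by minimality of $\gamma$ satisfies $i(\gamma',\gamma')\leq k-1$; one then rebuilds a representative of $\gamma$ from $\gamma'$ by re-inserting one loop and bounds the number of new crossings that loop creates (at most $16\sqrt{k+\tfrac14}$, by counting essential strands of $\gamma'$ in the cylinder). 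After this reduction every remaining pair of segments intersects at most $4$ times, and summing gives the bound $2m^2-m$.

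Your intuition that minimality prevents $\gamma$ from ``wasting length in thin regions'' points in the right direction, but the concrete content is this unwinding lemma (Lemma~\ref{lem:unwind1} and its variants), not a global length-versus-intersection inequality.
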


The two main features of our result is that the growth is linear in $k$ (for instance the upper bound is less that $600\, k$ for all $k\geq 2$) and that there is no dependence on the geometry or the topology of the surface. In particular, it holds for {\it any} hyperbolic surface where the question makes sense (meaning with non-abelian fundamental group, including infinite area or infinite type surfaces, although this is not our focus point). While the final result does not depend on the geometry of the surface, one of the main ideas of our proof is to use the specific geometry of the surface to find appropriate decompositions of candidate curves.

Although the proof is mostly self-contained, it is certainly inspired by a flurry of recent results \cite{AGPS, ChasRelations, ChasPhillips, ErlandssonSouto, SapirBounds, SapirLower} focused on understanding the relationship between self-intersection and the length of closed geodesics. One of the tools we do use is the upper bounds of Basmajian \cite{BasmajianUniversal, BasmajianShort} on the length of the shortest curve with at least $k$ self-intersections. We note that these length bounds can be used directly to find a linear upper bound on $I_k(X)$ but the bound depends on the geometry of $X$ (see Section \ref{sec:setup} for more details).

Basmajian also shows that there is a considerable difference in the length growth depending on whether surfaces have a cusp or not: the growth rate for closed surfaces is roughly $\sqrt{k}$ whereas it is $\log(k)$ if the surface has cusps. We are able to exploit that growth difference to prove an asymptopically optimal result for cusped surfaces.

\begin{theorem}\label{thm:cuspcase}
Let $X$ be an orientable complete finite type hyperbolic surface with at least one cusp. Then there exists constants $D(X),K(X)$, depending on $X$, such that
$$I_k(X)\leq k+D(X) \log (k)$$
for all $k>K(X)$.
\end{theorem}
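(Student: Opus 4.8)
The plan is to combine Basmajian's logarithmic length estimate for cusped surfaces with a geometric decomposition of the minimizing curve, and then to beat the resulting crude linear bound by exploiting length-minimality directly. Choose $\gamma$ realizing $I_k(X)$: a shortest closed geodesic with at least $k$ self-intersections whose self-intersection number is largest, and write $m=I_k(X)$ for this number, so that $m\ge k$ by definition and the goal is $m\le k+D(X)\log k$. The two inputs I would take from \cite{BasmajianUniversal,BasmajianShort} are the upper bound $\length(\gamma)\le 2\log k+A(X)$, valid because $X$ has a cusp, and the fact, read off from the standard horoball model, that a geodesic arc of length $\ell$ inside a cusp carries at most about $e^{\ell/2}$ self-intersections (an arc reaching height $h$ has length $\approx 2\log h$ and $\approx 2h$ self-intersections). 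These two facts already give the geometry-dependent linear bound $m\le C(X)k$ alluded to in the introduction, and Theorem \ref{thm:universal} gives a universal linear bound; the whole issue is to force the coefficient of $k$ down to exactly $1$.

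First I would localize the self-intersections. Fix an embedded cusp neighbourhood $C$ and split $m=m_{\mathrm{thin}}+m_{\mathrm{thick}}$ according to whether a double point lies in $C$ or in the thick part $X\setminus C$. Since the injectivity radius on $X\setminus C$ is bounded below by some $\eps(X)>0$, a sub-arc of $\gamma$ of length $L$ there has $O\!\big((L/\eps(X))^2\big)$ self-intersections. The length budget is the key leverage: to accumulate $k-O(1)$ self-intersections a curve must make a cusp excursion of length at least $2\log k-O(1)$, so the portion of $\gamma$ outside $C$ has length only $A(X)+O(1)=O(1)$, whence $m_{\mathrm{thick}}=O(1)$. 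The same budget argument shows that $\gamma$ can afford only one deep excursion into $C$: a second excursion contributing $s$ self-intersections would cost an extra $2\log s-O(1)$ of length, so all excursions other than the deepest one, together with the crossings between distinct excursions, contribute $O(1)$ to $m$.

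It then remains to bound the self-intersections produced by the single deepest excursion, which a priori is only $\le C(X)k$ from the length budget alone. Here I would use minimality. If this excursion wound around the cusp strictly more than is needed to reach $k$ self-intersections, then unwinding it by one turn yields a nearby free-homotopy class whose geodesic representative is strictly shorter, while its self-intersection number drops in a controlled way and stays at least $k$; this would contradict the minimality of $\length(\gamma)$. Consequently the winding of $\gamma$ in the cusp must be the least one compatible with having at least $k$ self-intersections, which pins the depth of the excursion and yields $m_{\mathrm{thin}}\le k+O(\log k)$. Combined with $m_{\mathrm{thick}}=O(1)$ and the $O(1)$ contribution of the remaining excursions, this gives $m\le k+D(X)\log k$ for all $k>K(X)$.

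The main obstacle is the unwinding step above. Because closed geodesics are rigid, ``removing one turn'' has to be performed at the level of free-homotopy classes and then transported to geodesic representatives, and one must control two things simultaneously: that the length genuinely decreases, and the exact amount by which the self-intersection number drops. It is the mismatch between the continuous depth of the excursion and the discrete, quantized way a closed geodesic can wind and still close up through the thick skeleton that produces the additive $\log k$ loss rather than an $O(1)$ one. Carrying out this surgery explicitly in horoball coordinates, in a way compatible with Basmajian's length estimates, is the technical heart of the argument; by contrast the localization and counting in the earlier steps are routine once the cusp model is fixed.
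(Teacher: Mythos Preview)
Your overall instincts---Basmajian's $O(\log k)$ length bound, the cusp excursion, and an unwinding/minimality argument---are the right ones, but the proof as written has a genuine gap, and the packaging via a thick/thin decomposition of self-intersections is less efficient than what the paper does.

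The gap is in your claim that $m_{\mathrm{thick}}=O(1)$. Your justification is that ``to accumulate $k-O(1)$ self-intersections a curve must make a cusp excursion of length at least $2\log k-O(1)$,'' leaving only $O(1)$ length for the thick part. But this presupposes that $k-O(1)$ of the intersections lie in the cusp, which is precisely the thick/thin split you are trying to establish. All that follows honestly from the length bound is $L_T\le 2\log k+A(X)$ and hence $m_{\mathrm{thick}}=O\big((\log k)^2\big)$, not $O(1)$. The same issue infects your reduction to a single deep excursion with $O(1)$ error: the accounting for secondary excursions and their mutual crossings is asserted rather than proved. With only $m_{\mathrm{thick}}=O((\log k)^2)$ in hand, even a perfect bound $m_{\mathrm{thin}}\le k$ would give $I_k(X)\le k+O((\log k)^2)$, weaker than the theorem.

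The paper avoids the whole thick/thin bookkeeping by a single surgery. Theorem~\ref{thm:thick} (the lower length bound in the thick part) is used only to force $\gamma$ into a cusp neighbourhood for large $k$; once there, one removes the innermost simple loop $\alpha$ of a deep returning strand to obtain $\gamma'$ with $\ell(\gamma')<\ell(\gamma)$, so minimality gives $i(\gamma',\gamma')\le k-1$ directly---no decomposition of $i(\gamma,\gamma)$ is needed. One then reconstructs a representative of $\gamma$ from $\gamma'$ by inserting a horocyclic loop $\alpha'$, giving $i(\gamma,\gamma)\le i(\gamma',\gamma')+i(\alpha',\gamma')$. The second term is at most twice the number of strands of $\gamma'$ entering a fixed horoball, and since each such strand has length bounded below by a constant (e.g.\ $2\log 2$), the total length bound $\ell(\gamma')<C(k,X)\sim\log k$ gives at most $O(\log k)$ strands. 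This yields $i(\gamma,\gamma)\le k-1+O(\log k)$ in one stroke. Your unwinding idea is essentially this step, but the paper applies minimality \emph{after} unwinding (to bound $i(\gamma',\gamma')$ from above by $k-1$) rather than arguing by contradiction about whether the unwound curve still has $\ge k$ intersections, and this orientation of the argument is what makes the counting clean.
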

Exactly where the constants $D(X)$ and $K(X)$ come from can be found in Section \ref{sec:cusp}. Unlike in the previous theorem, the bounds here depends on the geometry of $X$. Although we do not want to dwell on it here, the condition on $X$ being of finite type can be relaxed to there being a positive lower bound on the systole length of $X$.

Note that Theorem \ref{thm:cuspcase} implies that
$$\lim_{k\to\infty}\frac{I_k(X)}{k}=1$$
when $X$ has a cusp. We conjecture that the above limit is always equal to $1$, regardless of whether $X$ has a cusp or not, but our methods do not seem to extend easily to more general surfaces. 

Our proof of Theorem \ref{thm:cuspcase} requires a generalization of Basmajian's lower bounds on lengths \cite{BasmajianUniversal}. In particular, we need to be able to control the relationship between length and intersection in the $\varepsilon$-thick part of a surface (which we denote $X_T$). As our result may be of independent interest, we state it here.

\begin{theorem}\label{thm:thick}
For $\varepsilon \leq \frac{1}{2}$, the intersection $\gamma_T= \gamma \cap X_T$ satisfies
$$
\ell(\gamma_T) >\frac{\varepsilon}{12} {\sqrt{i(\gamma_T,\gamma_T)}}
$$
\end{theorem}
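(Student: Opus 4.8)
The plan is to read the claim as an upper bound on self-intersections in terms of length: it suffices to show $i(\gamma_T,\gamma_T) < \frac{144}{\varepsilon^2}\,\ell(\gamma_T)^2$. The single geometric input is that the $\varepsilon$-thick part has injectivity radius bounded below: every point of $X_T$ admits an embedded hyperbolic disk of radius $\rho$, with $\rho$ comparable to $\varepsilon$ (with the usual normalisation $\rho\ge \varepsilon/2$, and the bound is insensitive to the precise convention). The strategy is then the classical subdivide-and-count: chop $\gamma_T$ into short geodesic sub-arcs, show that two such sub-arcs meet at most once, and count pairs.

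First I would fix a threshold $\ell_0<\rho$ and cut each component of $\gamma_T$ into geodesic sub-arcs of length at most $\ell_0$, producing $N$ sub-arcs with $N$ of order $\ell(\gamma_T)/\ell_0$. Since $\ell_0<\rho$, each sub-arc lifts isometrically into a single embedded disk and is therefore itself embedded, so no self-intersection of $\gamma_T$ lives inside one sub-arc. The key local lemma is that two geodesic sub-arcs $a,b$ of length $<\rho$ meet in at most one point: if they met at $p\neq q$, then the portions of $a$ and $b$ joining $p$ to $q$ would form a loop of length $<2\rho$; this loop lies in an embedded disk and is hence contractible, so lifting to $\Hyp$ would produce two distinct geodesic segments with the same endpoints, which is impossible. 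Consequently every self-intersection is a crossing of two distinct sub-arcs, each pair contributing at most one, so $i(\gamma_T,\gamma_T)\le \binom{N}{2}<\tfrac12 N^2$. With $N\approx \ell(\gamma_T)/\ell_0$ and $\ell_0\approx \varepsilon/2$ this yields $\ell(\gamma_T)>\tfrac{\varepsilon}{\sqrt 2}\sqrt{i(\gamma_T,\gamma_T)}$, comfortably stronger than the stated $\tfrac{\varepsilon}{12}$.

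The main obstacle is that $\gamma_T$ is not a single arc but a union of arcs whose endpoints lie on $\partial X_T$, so the honest count is only $N\le \ell(\gamma_T)/\ell_0 + (\text{number of components})$, which is useless until the components are controlled: a priori $\gamma$ could make very many short excursions into $X_T$, inflating $N$ without increasing $\ell(\gamma_T)$. The point to establish is that such short excursions are near-boundary arcs sitting at essentially distinct positions along $\partial X_T$ (a collar boundary or a horocycle), hence are pairwise disjoint and contribute negligibly to the crossing count, whereas every sub-arc that genuinely participates in a self-intersection carries a definite amount of length $\gtrsim\rho$. Making this precise — and reading off the exact value of $\rho$ from the definition of the thick part — is where the real work lies; the generous factor $\tfrac{1}{12}$ (against the naive $\tfrac{1}{\sqrt 2}$) is precisely the slack that absorbs these boundary losses together with the loss in the injectivity-radius estimate.
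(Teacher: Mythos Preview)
Your chop-and-count strategy is exactly right and matches the paper's approach in spirit: decompose $\gamma_T$ into short geodesic sub-arcs living in embedded disks, use that any two such arcs meet at most once, and bound $i(\gamma_T,\gamma_T)$ by $\binom{N}{2}$. The paper implements this with Voronoi cells of an $\varepsilon$-net rather than a pure length subdivision, and bounds the number of cells a component visits by an area argument in the universal cover; your length-based cutting would work equally well once the real obstacle is handled.

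That obstacle --- the additive ``$+\,(\text{number of components})$'' in your count of $N$ --- is where your proposal has a genuine gap. Your suggested fix, that short excursions into $X_T$ are ``near-boundary arcs\ldots\ pairwise disjoint and contribute negligibly to the crossing count,'' is not established and is not how the paper proceeds. Even granting pairwise disjointness of short components among themselves, nothing prevents each one from crossing many of the long sub-arcs, and there is no a~priori bound on how many such short excursions occur. The paper instead removes the issue entirely with a separate lemma: for $\varepsilon\le\tfrac12$, \emph{every} component $\gamma_i$ of $\gamma_T$ has length at least $\tfrac34$. The proof is a one-line trick you are missing: the endpoints $p,q$ of $\gamma_i$ lie on $\partial X_T$ and hence carry simple loops $\alpha,\beta$ of length $2\varepsilon$; the concatenation $\alpha*\gamma_i*\beta*\gamma_i$ is homotopic to a non-simple closed geodesic, so has length at least $4\log(1+\sqrt 2)$, forcing $2\ell(\gamma_i)>4\log(1+\sqrt 2)-4\varepsilon>\tfrac32$. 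With this in hand the number of components is at most $\tfrac43\,\ell(\gamma_T)$, the additive term is absorbed, and the count goes through to yield $m<\tfrac{12}{\varepsilon}\ell(\gamma_T)$ and hence the stated inequality. This component-length lemma is the missing ingredient; without it (or a genuine replacement) the argument does not close.
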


Note that a closed surface is $\varepsilon$-thick for sufficiently small $\varepsilon$, so we recuperate Theorem 1.1 from \cite{BasmajianUniversal} with a somewhat different proof. 

We end the introduction by addressing the very natural question of lower bounds on $I_k(X)$. By definition, $I_k(X)\geq k$ with equality for infinitely many $k$. In fact, it is not a priori obvious that equality does not hold for {\it all} $k\geq 1$. However, there is a heuristic argument, inspired by results from \cite{BasmajianUniversal}, for why this should not always be the case. We illustrate it with a pair of pants $P$, say with three cuff lengths of length $1$. The local behavior of a closed geodesic is to either loop around one of the three boundary curves, or to follow some trajectory in the middle portion of the pair of pants, for instance that of a figure eight geodesic. If a closed geodesic closely follows a figure eight geodesic $n$ times, this creates roughly $n^2$ self-intersection points. On the other hand, a curve that loops $n$ times around a cuff creates roughly $n$ self-intersection points. Now assume there is a minimal length curve realizing $I_k(P)$ that has exactly $k$ self-intersections. Suppose you want to modify it to get a candidate for $I_{k+k_0}(P)$ for some $k_0$ relatively small compared to $k$. Each loop around a boundary costs you roughly $1$ in length, but although this is less than taking an extra copy of a figure eight curve, you are only getting one extra intersection point per loop. Thus, in terms of length, it would be more efficient to take (quasi) copies of a figure eight to generate self-intersection points than by looping around a boundary. Making the above argument rigorous would require a more delicate analysis of curves in pairs of pants, very different in nature from the methods used in this paper, but nonetheless, we expect that
$$
\limsup_{k\to \infty} (I_k(X) -k) = \infty
$$
for any compact $X$.

\section{Closed curves and their lengths}\label{sec:setup}

\subsection{Setup and known results}

Let $X$ be an orientable complete hyperbolic surface with non-abelian fundamental group. Said differently, we ask that $X$ is not the hyperbolic plane and is not topologically a cylinder. We want $X$ to have an interesting set of closed geodesics.

We will denote by $\G(X)$ the set of closed geodesics, by $\G_k(X)$ the subset of those that self-intersect exactly $k$ times, and by $\G_{\geq k}(X)$ those that intersect at least $k$ times. Basmajian studied the following quantity \cite{BasmajianUniversal, BasmajianShort}:
$$
s_k(X):= \inf \{ \ell(\gamma) : \gamma \in \G_k(X)\}
$$
showing that
$$
s_k(X) \leq 2 C_8(X) \sqrt{k+\frac{1}{4}}
$$
where $C_8(X)$ is the length of the shortest figure eight closed geodesic on $X$. (As mentioned above, Buser showed that $C_8(X)$ is also the length of the shortest non-simple closed geodesic of $X$.) The general gist of the proof of the above inequality is to construct a closed geodesic which follows the figure eight curve multiple times. The number of self-intersections of such a curve is roughly the square of the number of copies of the figure eight curve. To create a primitive closed curve, and to get the correct intersection number on the nose, require more delicate arguments. We remark that the above bound, from \cite{BasmajianShort}, is an improvement on previous bounds in \cite{BasmajianUniversal} where lower bounds on $s_k(X)$ are also explored. A fact about $s_k(X)$ that we will use in the sequel is the discrepancy between the growths when $X$ has cusps or not. The growth is logarithmic in $k$ when $X$ has a cusp. 

By discreteness of the length spectrum (for finite type surfaces), the value $s_k(X)$ is realized by the length of at least one closed geodesic. In particular, for $k=0$ this is the systole which, unless $X$ is a three holed sphere, is realized by a simple closed curve since the shortest non-trivial curve is always simple. If $X$ is a three holed sphere, the systole is a figure eight geodesic.

A related quantity is the following:
$$
s_{\geq k}(X):= \inf \{ \ell(\gamma) : \gamma \in \G_{\geq k}(X)\}
$$
and again it must be realized by the length of certain closed geodesics which may or may not have $k$ self-intersections. The actual number of self-intersections is our main concern in this article, and we will denote this number by $I_k(X)$. As $s_{\geq k}(X) \leq s_k(X)$, the inequality stated above for $s_k(X)$ also holds for $s_{\geq k} (X)$.

When $X$ is compact the upper bounds on $s_{\geq k} (X)$ are matched by lower bounds  \cite{BasmajianUniversal} of the form $C(X)\sqrt{k}$. Here the constant depends on the geometry of $X$ in such a way that $C(X)$ tends to $0$ when $X$ approaches the boundary of moduli space. These bounds, when appropriately put together, give a linear upper bound on $I_k(X)$ of type $U(X) k$ but where $U(X)$ this time goes to infinity as $X$ approaches the boundary of moduli space. In constrast, Basmajian's upper bounds \cite{BasmajianStable} on $I_k(X)$, when $X$ is complete and of finite area, only depend on the topology of $X$:
$$
I_k(X) \leq F(g, n, k)
$$
Here $g$ is the genus of $X$, $n$ the number of cusps and $F$ an explicit function. The proof is based on a generalization of the classical collar lemma for simple closed geodesics to closed geodesics. This generalized collar lemma implies that (self-)intersection points must create length, and as there is a bound on the length of the shortest curves with given lower bound on number of self-intersections, there cannot be arbitrarily many self-intersection points.

\subsection{Intersections and length}

We begin with the following lemma which relates lengths of simple closed geodesics and lengths of figure eight geodesics. 

\begin{lemma}\label{lem:simple8}
Let $\alpha,\beta$ be simple closed geodesics on $X$ with $i(\alpha,\beta) =1$ and $\ell(\alpha), \ell(\beta) \leq L$. Then
$$
C_8(X) < 4 L
$$
\end{lemma}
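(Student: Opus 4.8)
The plan is to build, directly from $\alpha$ and $\beta$, a single closed curve that is non-simple and has length less than $4L$. By Buser's result (recalled above) $C_8(X)$ equals the length of the shortest non-simple closed geodesic, so producing \emph{any} non-simple closed geodesic $\eta$ with $\ell(\eta)<4L$ immediately yields $C_8(X)\le \ell(\eta)<4L$.

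Let $p$ be the unique point of $\alpha\cap\beta$ and view $\alpha,\beta$ as loops based at $p$. I would consider the element $w=\alpha\beta\alpha\beta^{-1}\in\pi_1(X,p)$, represented by the piecewise geodesic loop obtained by traversing, in order, $\alpha$, $\beta$, $\alpha$ and $\beta^{-1}$. This representative has length $2\ell(\alpha)+2\ell(\beta)\le 4L$, and since $\alpha$ and $\beta$ cross transversally at $p$ it has genuine corners there, so it is not a geodesic. Hence its geodesic representative $\eta$ satisfies $\ell(\eta)<4L$ strictly.

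The heart of the argument is to show that $\eta$ is non-simple. First I would note that the cyclically reduced word $\alpha\beta\alpha\beta^{-1}$ is not a proper power (it is not of the form $XX$), and that the free subgroup $\langle\alpha,\beta\rangle\le\pi_1(X)$ carried by a regular neighbourhood of $\alpha\cup\beta$ (a one-holed torus) is root-closed; hence $w$ is primitive and $\eta$ is a primitive closed geodesic. Next I would compute the homology class $[\eta]=[w]=2[\alpha]$. Because $i(\alpha,\beta)=1$ is realized by a single transverse crossing, the algebraic intersection number satisfies $\hat i([\alpha],[\beta])=\pm1$, so $\hat i([\eta],[\beta])=\pm2\ne 0$. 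A separating simple closed curve is null for the intersection pairing, while a non-separating one has primitive homology class; since $2[\alpha]$ is non-zero, non-primitive, and pairs nontrivially with $[\beta]$, it is the class of no simple closed curve. As $\eta$ is a primitive geodesic in this class, it must have a genuine (transverse) self-intersection, i.e. it is non-simple.

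Putting these together gives $C_8(X)\le\ell(\eta)<4L$. I expect the main obstacle to be exactly the non-simplicity step: a priori the concatenation could tighten to a simple geodesic — indeed the shorter words $\alpha\beta$ and $\alpha\beta^{-1}$ both do — so the real content lies in choosing a word that is short, not a proper power, and whose homology class structurally forbids a simple representative. The homology dichotomy for classes of simple closed curves is what converts the algebraic datum $\hat i([\alpha],[\beta])=\pm1$ into the desired geometric self-intersection, while the primitivity check rules out the degenerate case of a doubly covered simple geodesic.
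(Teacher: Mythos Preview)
Your proof is correct and follows essentially the same approach as the paper: both construct a length-$4$ concatenation of $\alpha^{\pm 1}$ and $\beta^{\pm 1}$ whose geodesic representative is non-simple and of length strictly less than $4L$ (the paper uses $\alpha\beta\alpha^{-1}\beta$, you use the symmetric variant $\alpha\beta\alpha\beta^{-1}$). The paper simply asserts the resulting geodesic is a figure eight, whereas you supply a homological argument for non-simplicity; your extra care about primitivity and root-closure is more than strictly needed, since even if $\eta$ were a proper power, the transverse self-intersection you produce would persist on the underlying primitive geodesic, which is still shorter than $4L$.
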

\begin{proof}
We think of $\alpha$ and $\beta$ as oriented loops based in their intersection point. The geodesic in the homotopy class of the closed curve obtained by the following concatenations
$$
\alpha * \beta * \alpha^{-1} * \beta
$$
is a figure eight geodesic whose length is strictly less than $2 \ell(\alpha) + 2\ell( \beta)$ which is at most $4L$. 
\end{proof}

As a corollary we have the following.

\begin{corollary}\label{cor:balls}
For any $p \in X$ and for all $r_0\leq \frac{C_8(X)}{8}$, the set $B_{r_0} (p)$ is topologically either a disk or a cylinder.
\end{corollary}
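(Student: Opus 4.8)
The plan is to argue by contradiction: suppose $r_0\le \frac{C_8(X)}{8}$ but $B_{r_0}(p)$ is neither a disk nor a cylinder, and produce a figure eight geodesic of length $<8r_0\le C_8(X)$, which is absurd since $C_8(X)$ is by definition the length of the shortest figure eight geodesic. First I would pass to the universal cover, writing $X=\Hyp/\Gamma$ and fixing a lift $\tilde p$ of $p$. Since balls in $\Hyp$ are convex, a standard argument identifies the image of $\pi_1(B_{r_0}(p),p)$ in $\Gamma$ with the subgroup generated by $S=\{g\in\Gamma\setminus\{1\}:d(\tilde p,g\tilde p)<2r_0\}$, and identifies the topological type of the ball with the algebraic type of this subgroup: it is a disk exactly when $S=\emptyset$ and a cylinder exactly when $\langle S\rangle$ is infinite cyclic. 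The useful geometric translation is that each $g\in S$ is realised by a geodesic loop at $p$ of length $<2r_0$; since such a loop has all of its points within $r_0$ of $p$, it indeed lies in $B_{r_0}(p)$, and a shortest such loop is simple and freely homotopic to a simple closed geodesic of length $<2r_0$.

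Under the contradiction hypothesis $\langle S\rangle$ is neither trivial nor infinite cyclic, so I can select two non-commuting elements of $S$ and hence two short simple closed geodesics $\alpha,\beta$ of length $<2r_0$, each meeting $B_{r_0}(p)$. The cleanest outcome is when one can take $i(\alpha,\beta)=1$: then Lemma \ref{lem:simple8}, applied with $L=2r_0$, gives
$$C_8(X)<4L=8r_0\le C_8(X),$$
a contradiction, and the corollary follows.

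The main obstacle is therefore the topological bookkeeping needed to guarantee this configuration, that is, that the extra topology carried by the ball really produces two short simple closed geodesics crossing exactly once. The delicate case is when the relevant short curves are disjoint, so that $B_{r_0}(p)$ looks like a pair of pants rather than a one-holed torus; there is then no pair with $i(\alpha,\beta)=1$ to feed into Lemma \ref{lem:simple8}. In that situation I would instead build a figure eight directly: taking two disjoint short cuffs $c_1,c_2$ of the pants, the based loop that runs from $p$ out to $c_1$, once around it, back to $p$, then out to $c_2$, once around it, and back has length $<8r_0$ and (for the correct choice of orientations) is freely homotopic to the figure eight geodesic encircling the two cuffs, whose length is consequently $<8r_0\le C_8(X)$ --- the same contradiction. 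Checking that in every non-disk, non-cylinder case one lands in one of these two scenarios, and that the geodesic representatives are genuinely figure eight geodesics (exactly one self-intersection) rather than simple curves, is the part that requires care; everything else reduces to the length estimates above together with the definition of $C_8(X)$.
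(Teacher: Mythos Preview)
Your overall strategy---two non-commuting short based loops, split into the one-holed torus case (feed into Lemma~\ref{lem:simple8}) and the pair-of-pants case (build a figure eight directly)---is exactly the paper's approach. The issue is in the execution of the pants case.

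You pass from the based geodesic loops at $p$ to their free geodesic representatives, the ``short cuffs'' $c_1,c_2$, and assert that these meet $B_{r_0}(p)$ and that the path ``from $p$ out to $c_1$, around, back, out to $c_2$, around, back'' has length $<8r_0$. Neither claim is justified. Writing $d_i$ for the distance from $p$ to the core geodesic $c_i$, the displacement formula $\sinh(L_i/2)=\cosh(d_i)\sinh(\ell(c_i)/2)$ shows that if $\ell(c_i)$ is very short then $d_i$ can be arbitrarily large even while the based loop length $L_i$ stays below $2r_0$. So the cuffs need not lie in $B_{r_0}(p)$, and your candidate curve can have length $2d_1+\ell(c_1)+2d_2+\ell(c_2)$ far exceeding $8r_0$.

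The fix is to stay with the based loops and never pass to the cuffs: if $\alpha,\beta$ are the two simple geodesic loops at $p$, each of length $<2r_0$, and they generate a pair of pants, then the concatenation $\alpha*\beta$ already represents a figure eight and has length $<4r_0\le \tfrac{1}{2}C_8(X)$. This is what the paper does, and it actually yields a stronger contradiction ($4r_0$ rather than $8r_0$) with no detour through the free representatives.
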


\begin{proof}
If not, then there is a point $p$ which is the base point of at least two distinct (and thus non-homotopic) simple geodesic loops $\alpha$ and $\beta$ of length at most $2 r_0$. These two loops could generate a pair of pants in which case the geodesic in the homotopy class of $\alpha*\beta$ is a figure eight geodesic of length at most $4 r_0 \leq \frac{C_8}{2}$ which is impossible. Otherwise they generate a one-holed torus in which case we refer to the previous lemma to conclude that $C_8(X) < 8 \frac{C_8}{8}$, again a contradiction.
\end{proof}

The above observation will be crucial in the sequel.

\section{Bounding intersection numbers}

We can now turn our attention to the problem at hand, namely the proof of Theorem \ref{thm:universal}. For clarity of exposition, we suppose that $X$ is of finite type. What we really use is the discreteness of the length spectrum which may fail if $X$ is of infinite type. In Remark \ref{rem:infinite} below, we discuss how to adapt the argument to when $X$ has a non-discrete length spectrum. However, we insist on the fact that this is not our focus point and the remark can be ignored by the reader only interested in finite type surfaces.

Let $\gamma \in \G_{\geq k} (X)$ be of minimal length. We seek to find an upper bound on $i(\gamma,\gamma)$. Once and for all, set $r_0$ to be the quantity
$$
r_0:= \frac{C_8(X)}{8}
$$
We cut $\gamma$ into segments $c_1, c_2, \hdots, c_m$, all of length $r_0$ except possibly $c_m$ which may be shorter. Note that by Basmajian's inequality 
$$
\ell(\gamma) < 2 C_8(X) \sqrt{k+\frac{1}{4}} = 16 r_0 \sqrt{k+\frac{1}{4}}
$$
and as such
$$
m \leq \ceil[\Bigg]{16 \sqrt{k+\frac{1}{4}}} < 16 \sqrt{k+\frac{1}{4}} + 1
$$
\begin{remark}\label{rem:infinite}
When the length spectrum of $X$ is not discrete, we cannot guarantee that $\gamma$ of minimal length exists (see \cite{BasmajianKim} for results about infinite type surfaces with non-discrete length spectra). However, Basmajian's inequality above continues to hold as we will briefly explain. The inequality depends only on $C_8(X)$, which may or may not be realized by a figure eight geodesic on $X$. Suppose it is not. Then there is a sequence of figure eight geodesics whose lengths $L_i$ tend to $C_8(X)$. Thus, for each $i\in \N$, there is a geodesic $\gamma_i$ with self-intersection at least $k$ satisfying the inequality 
$$
\ell(\gamma_i) < 2 L_i \sqrt{k+\frac{1}{4}} 
$$
From this we can deduce the existence of a $\gamma$ with self-intersection $k$ such that 
$$
\ell(\gamma) \leq 2 C_8(X) \sqrt{k+\frac{1}{4}} = 16 r_0 \sqrt{k+\frac{1}{4}}
$$
The arguments presented in what follows can all be adapted to the the non-discrete case by suitably replacing a minimal length $\gamma$ by a curve $\gamma$ of length arbitrarily close to the infimum of lengths. However, for clarity, we will not continually refer to how to adapt the arguments in this more general setting in the sequel.  
\end{remark}

Note that due to our choice of $r_0$ and Corollary \ref{cor:balls}, any pair of intersecting segments $c_i, c_j$ (not necessarily distinct) all live in either disks or cylinders. 
If they live in a disk, then they are simple and can pairwise intersect at most once. We observe therefore that if all pairs of segments lived in disks, there would be an immediate upper bound on self-intersection given by 
$$
\frac{m^2}{2} - m
$$
Replacing $m$ with the upper bound in terms of $k$ proves the main theorem in this case, but of course we cannot a priori suppose this to be the case. 

We use the word \emph{strand} for a segment in a cylinder which has both its endpoints on the boundary of the cylinder. In general this is not always the case for our segments $c_i$, however, we will often extend segments to strands. By abuse of notation, we denote the strand also by $c_i$. 

If a segment $c_i$ lives in a cylinder $\CC$, it can be one of two types. Consider $\delta_{+}$ and $\delta_{-}$ the two boundary curves of $\CC$. If the strand $c_i$ intersects both $\delta_{+}$ and $\delta_{-}$ in its endpoints, it is a simple geodesic segment as there is no topology to create self-intersection. We refer to this type as a \emph{crossing} strand (an example is the leftmost strand in Figure \ref{fig:strands}).

\begin{figure}[h]
\leavevmode \SetLabels
\endSetLabels
\begin{center}
\AffixLabels{\centerline{\epsfig{file =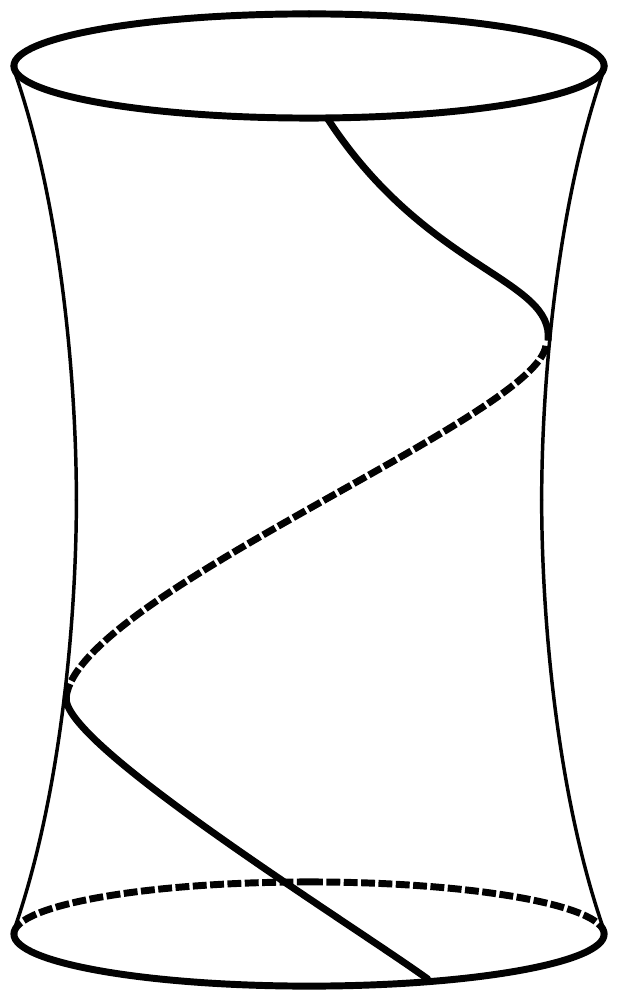,width=2.8cm,angle=0}\hspace{55pt}\epsfig{file =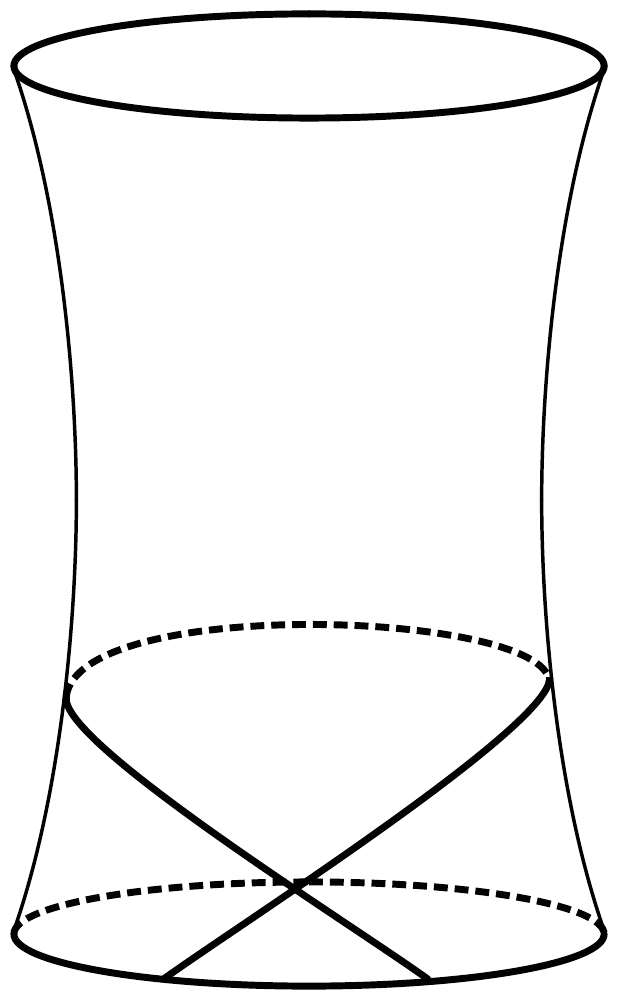,width=2.8cm,angle=0}\hspace{55pt}\epsfig{file =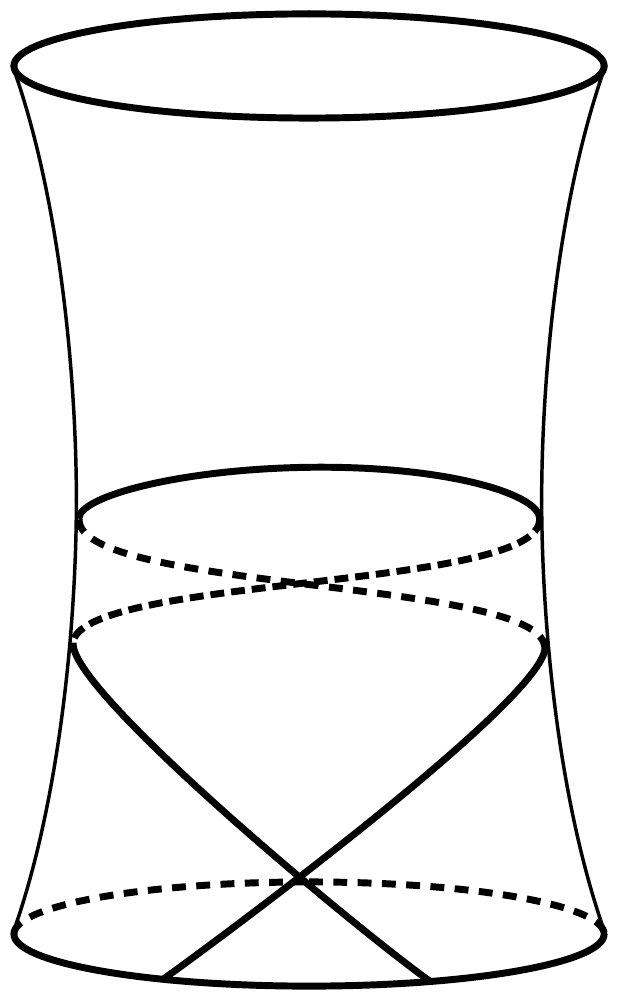,width=2.8cm,angle=0}}}
\vspace{-24pt}
\end{center}
\caption{A crossing strand and two returning strands} \label{fig:strands}
\end{figure}

The other type, which we will call a \emph{returning} strand, has both its endpoints on the same boundary curve, say $\delta_{-}$. In this case, it may have self intersection points which appear as a result of it wrapping around the core curve of the cylinder. In Figure \ref{fig:strands}, the middle and right strands have $1$ and $2$ self-intersection points. 

If the cylinder $\CC$ has core curve $\delta$ we define the \emph{winding number} $\omega(c_i)$ of a strand $c_i$ in $\CC$ (with respect to $\CC$) in the following way. Every point of $c_i$ projects to a well-defined point of $\delta$. The winding number of $c_i$ is given by the length of the projection of $c_i$ (thought of as a parameterized segment) divided by the length of $\delta$. 

Understanding the behavior of segments lying in embedded cylinders will be crucial. Here we record a fact about the intersection numbers of segments lying in the same cylinder.
\begin{lemma}\label{lem:int} 
Let $s_1, s_2$ be two distinct crossing strands, $r_1, r_2$ two distinct returning strands, all lying in the same cylinder, with $\omega(s_1)\leq\omega(s_2)$ and $\omega(r_1)\leq\omega(r_2)$. Then:
\begin{enumerate}
\item $i(s_1,r_1)\leq \lceil \omega(r_1) \rceil$
\item $i(r_1,r_1)\leq\lceil \omega(r_1) \rceil$
\item $i(r_1,r_2)\leq2\lceil\omega(r_1)\rceil$
\item $i(s_1,s_2)\leq\lceil\omega(s_1)\rceil$
\end{enumerate}  
\end{lemma}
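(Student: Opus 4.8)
The plan is to lift to the universal cover $\Hyp$ of the cylinder $\CC=\Hyp/\langle h\rangle$, where $h$ is the hyperbolic translation along the lift $A$ of the core $\delta$, of translation length $\ell(\delta)$, and to reduce every bound to counting integer translates of one geodesic arc across another. The two boundary curves lift to hypercycles equidistant from $A$, each bounding a half-plane, and a strand lifts to a geodesic arc $\tilde c$. I would first record that the nearest-point projection to $A$ is strictly monotone along $\tilde c$: in Fermi coordinates $(t,\rho)$ about $A$, where the metric is $d\rho^2+\cosh^2\rho\,dt^2$, Clairaut's relation forces $\cosh^2\rho\,\dot t$ to be a nonzero constant unless $\tilde c$ is radial. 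Hence the total $t$-travel of $\tilde c$ equals $\omega(c)\,\ell(\delta)$, and $\tilde c$ is a simple arc which is a graph over a $t$-interval of that length, with endpoints on one boundary hypercycle (returning) or on both (crossing). The guiding principle is that $i(c,c')=\sum_{n\in\Z}\#(\tilde c\cap h^n\tilde c')$, each summand being at most $1$ because distinct complete geodesics of $\Hyp$ meet at most once; for the self-intersection $i(r,r)$ the involution $n\mapsto-n$ halves the count.

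The core counting criterion concerns a returning strand $r$: its lift $\tilde r$ together with the boundary sub-segment beneath it bounds a half-disk $D$ whose boundary arc has $t$-length $\omega(r)\,\ell(\delta)$. A Jordan-curve parity argument shows that any other lift crosses $\tilde r$ an odd number of times exactly when one of its endpoints lies in that open boundary interval, and combined with the fact that the two arcs \emph{meet at most once} this becomes an exact criterion. Since the endpoints of the translates $h^n(\cdot)$ form arithmetic progressions of spacing $\ell(\delta)$, any interval of $t$-length $\omega(r)\,\ell(\delta)$ receives at most $\lceil\omega(r)\rceil$ of them, which is where the ceilings enter.

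Applying this to the four cases: for (1) the crossing strand $s_1$ has a single endpoint on the boundary of $r_1$, so it crosses $\tilde r_1$ precisely when that endpoint lands under the hump, giving at most $\lceil\omega(r_1)\rceil$ qualifying translates. For (2) a translate $h^n\tilde r_1$ with $0<|n|<\omega(r_1)$ swaps sides of $\tilde r_1$ across their overlap and so crosses it once; pairing $n$ with $-n$ yields $i(r_1,r_1)\le\lceil\omega(r_1)\rceil$. For (3), taking the smaller-winding $r_1$ as reference, each translate of $r_2$ contributes two boundary endpoints, each landing under the hump of $\tilde r_1$ at most $\lceil\omega(r_1)\rceil$ times, and since the crossing condition asks for exactly one of the two to be inside, the total is at most $2\lceil\omega(r_1)\rceil$. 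Case (4) is the analogous interleaving count for two crossing arcs, now tracking endpoints on both boundary hypercycles and using $\omega(s_1)\le\omega(s_2)$ to take $s_1$ as the reference arc.

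I expect the main obstacle to be the interleaving and parity bookkeeping needed to extract the sharp constants rather than the overall strategy: upgrading the parity statement to an exact count via the hyperbolic fact that distinct geodesics meet at most once, correctly handling the two endpoint families in (3) to produce the factor $2$, justifying the $\pm n$ pairing and the side-swapping in the self-intersection case (2), and in the crossing–crossing case (4) controlling the endpoints on both boundary hypercycles simultaneously while keeping the dependence on the smaller winding number.
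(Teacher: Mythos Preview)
Your approach is genuinely different from the paper's. The paper never lifts to $\Hyp$; instead it builds, for each strand $c$, an explicit representative $c'$ homotopic to $c$ rel endpoints, consisting of a perpendicular from one endpoint to an equidistant loop $\delta_c$, an arc winding $\omega(c)$ times along $\delta_c$, and a perpendicular back to the other endpoint. The heights of the loops $\delta_c$ are staggered (returning strands near $\delta_-$, crossing strands near $\delta_+$, smaller winding closer to the boundary) so that intersections between two such piecewise representatives can be read off by eye. Your method of counting $\Z$-translates in the strip, using that two hyperbolic geodesics meet at most once, is cleaner conceptually and makes the appearance of $\omega$ completely transparent via the $t$-projection.

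For parts (1)--(3) your Jordan/parity argument combined with the single-intersection property is correct and yields exactly the stated ceilings. There is, however, a genuine gap in your sketch of (4). When you carry out the interleaving count, the translate $h^n\tilde s_2$ crosses $\tilde s_1$ precisely when $n\ell(\delta)$ lies strictly between $a_{-}-b_{-}$ and $a_{+}-b_{+}$, where $a_{\pm},b_{\pm}$ are the $t$-coordinates of the endpoints of $\tilde s_1,\tilde s_2$ on the two hypercycles. The length of this interval is $|(a_{+}-a_{-})-(b_{+}-b_{-})|$, which equals $|\omega(s_1)\pm\omega(s_2)|\,\ell(\delta)$ according to whether the two strands wind in the same or opposite sense; it is \emph{not} $\omega(s_1)\ell(\delta)$. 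Taking the smaller-winding strand as reference therefore does not produce $\lceil\omega(s_1)\rceil$ in general; a direct computation in the flat cylinder with $\omega(s_1)=0.1$, $\omega(s_2)=3$ already gives three transverse intersections. The paper's one-sentence verification of (4) via representatives is equally terse and does not visibly avoid this issue. In the paper's actual application, though, (4) is only invoked after all crossing strands have been arranged to satisfy $\omega\le\frac12$, and in that regime your count gives $\lceil\omega(s_1)+\omega(s_2)\rceil\le 1$ regardless of direction, so the downstream argument survives.
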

 
 \begin{proof} 
 Suppose the cylinder $\CC$ has boundary curves $\delta_-$ and $\delta_+$ and core curve $\delta$. For each strand $c_i$ in $\CC$ we will construct a representative $c_i'$ homotopic to $c_i$ (relative its endpoints on $\delta_-$ and $\delta_+$) and use it to get an upper bound on the intersection numbers. Suppose $c_i$ has endpoints $p$ and $q$ on $\delta_-$ or $\delta_+$. Note that if $r_1$ has both its endpoints on $\delta_-$ and $r_2$ has both its endpoints on $\delta_+$ then $i(r_1,r_2)=0$. Hence we can assume with out loss of generality that $c_i$ has at least one endpoint on $\delta_-$. We construct $c_i'$ the following way. Choose a simple loop $\delta_{c_i}$ in the interior of $\CC$ such that every point is on it is equidistant to $\delta$. Let $c_i'$ be the curve consisting of the perpendicular segment between $p$ and $\delta_{c_i}$, a segment winding around $\delta_{c_i}$ according to $\omega(c_i)$, and finally the perpendicular segment between $\delta_{c_i}$ and $q$. Moreover, if $c_i$ is a returning strand, chose $\delta_{c_i}$ to be closer to $\delta_-$ than $\delta_+$, and if its a crossing strand chose it closer to $\delta_+$. Finally, if $c_i$ and $c_j$ are of the same type and $\omega(c_i)<\omega(c_j)$ choose $\delta_{c_i}$ to be closer to the boundary of $\CC$ than $\delta_{c_j}$ is (and when they have the same winding number, make an arbitrary choice). Clearly $c_i'$ is homotopic to $c_i$. 
 
 For $i=1,2$, let $s_i'$ and $r_i'$ be the representatives of $s_i$ and $r_i$ obtained as above. It is clear that $\vert s_1'\cap r_1'\vert\leq\lceil\omega(r_1)\rceil$ and since $i(s_1,r_1)\leq\vert s_1'\cap r_1'\vert$ we have proved the first part of the lemma. The remaining parts follow similarly.  
 \end{proof}

\subsection{Unwinding curves}

We begin by finding a bound on $i(\gamma,\gamma)$ in the case where a segment $c_i$ self-intersects more than $2$ times. Note that if this happens it necessarily lives inside a cylinder and is a returning strand.

\begin{lemma}\label{lem:unwind1}
If there exists $c_i$ with $i(c_i,c_i) \geq 2$, then 
$$
i(\gamma,\gamma) \leq  k -1 + 16  \sqrt{k+\frac{1}{4}}
$$
\end{lemma}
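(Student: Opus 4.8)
The plan is to combine the minimality of $\gamma$ with an \emph{unwinding} operation that strictly shortens $\gamma$ while removing only a controlled number of self-intersections. Since $i(c_i,c_i)\geq 2$, the segment $c_i$ cannot be simple, so by Corollary \ref{cor:balls} it lives in an embedded cylinder $\CC$ and must be a returning strand; write $\delta$ for the core of $\CC$ and $\omega:=\omega(c_i)$ for its winding number. By Lemma \ref{lem:int}(2) we have $i(c_i,c_i)\leq\lceil\omega\rceil$, so the hypothesis forces $\omega>1$, and in particular $c_i$ genuinely wraps around $\delta$ more than once.

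First I would replace $c_i$ by the returning strand $c_i'$ with the same endpoints on $\delta_-$ but with winding number reduced by one full turn (that is, differing from $c_i$ by one inverse Dehn twist about $\delta$), and let $\gamma'$ be the closed geodesic freely homotopic to the resulting concatenation. Because reducing the winding in a cylinder strictly shortens the geodesic arc joining two fixed boundary points, the piecewise geodesic obtained by swapping $c_i$ for $c_i'$ is strictly shorter than $\gamma$, and hence so is its geodesic representative: $\ell(\gamma')<\ell(\gamma)$. Since $\gamma$ realizes the minimum of $\ell$ over $\G_{\geq k}(X)$, the shorter curve $\gamma'$ cannot lie in $\G_{\geq k}(X)$; therefore $i(\gamma',\gamma')\leq k-1$.

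It then remains to bound the decrease $D:=i(\gamma,\gamma)-i(\gamma',\gamma')$ by $16\sqrt{k+\frac14}$, since combining this with the previous step yields exactly $i(\gamma,\gamma)\leq k-1+16\sqrt{k+\frac14}$. Only intersection points lying on $c_i$ are affected by the swap, and these split into two kinds. The self-intersections of $c_i$ drop by exactly one: a returning strand of winding $\omega$ carries $\lfloor\omega\rfloor$ self-intersections in the cylinder, so passing to winding $\omega-1$ removes a single one. For the intersections of $c_i$ with each of the other strands of $\gamma$ inside $\CC$, I would use the winding-number estimates of Lemma \ref{lem:int}: removing one twist lowers each of the relevant ceilings $\lceil\omega(\cdot)\rceil$ by one, so each such strand loses at most one intersection with $c_i$. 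As there are at most $m-1$ other strands and $m<16\sqrt{k+\frac14}+1$, this bounds $D$ by roughly $m$, which must be reconciled with the target bound $16\sqrt{k+\frac14}$.

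The main obstacle is precisely this last accounting: naively the self-intersection loss of $1$ together with one lost intersection for each of the up to $m-1$ remaining strands only gives $D\leq m$, which is off by one from what is needed. Tightening it requires a closer look at the two passes of a returning strand and at the endpoints of $c_i$ — for instance, observing that a strand sharing an endpoint region with $c_i$, or one lying on the opposite boundary, contributes no loss — so that the effective number of contributing strands is at most $m-2$ and the total decrease is at most $16\sqrt{k+\frac14}$. Ensuring that the factor in Lemma \ref{lem:int}(3) costs only one intersection per removed twist, rather than two, is the delicate point, and this is where the explicit choice of representatives (pushing each returning strand towards its own boundary) is essential.
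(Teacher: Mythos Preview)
Your overall strategy---unwind $c_i$ by one turn to get a strictly shorter $\gamma'$, deduce $i(\gamma',\gamma')\le k-1$ from minimality, then bound the drop $D=i(\gamma,\gamma)-i(\gamma',\gamma')$---is the right one and matches the paper in spirit. The gap is exactly where you locate it: the accounting for $D$ does not close.

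Your plan is to bound $D$ by summing, over the other segments $c_j$, the intersection loss with $c_i$. This runs into two difficulties you partly acknowledge. First, Lemma~\ref{lem:int} gives only upper bounds on intersection numbers, not on how they change under unwinding; in particular part (3) carries a factor of $2$, so a fellow returning strand could lose two intersections with $c_i$ per removed twist rather than one, doubling your estimate. Second, and more structurally, the $c_j$ are length-$r_0$ pieces of $\gamma$, not strands of the particular cylinder $\CC$ in which $c_i$ wraps; there is no clean correspondence between the two, and the attempt to shave the count to $m-2$ by excluding segments that ``share an endpoint region'' or ``lie on the opposite boundary'' is ad hoc and does not obviously succeed.

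The paper sidesteps this bookkeeping entirely. Rather than tracking intersection losses segment by segment, it builds an explicit (non-geodesic) representative of $\gamma$ starting from the geodesic $\gamma'$: on the strand $c'\subset\gamma'$ corresponding to the truncated $c_i$, at its point closest to $\delta$, splice in a single equidistant loop $\alpha'$ around the core. Since this represents $\gamma$, one has $i(\gamma,\gamma)\le i(\gamma',\gamma')+i(\alpha',\gamma')$. The term $i(\alpha',\gamma')$ is then bounded purely by length: the cylinder $\CC$ is first extended maximally in parallel directions so that every essential arc of $\gamma'$ through it has length at least $2r_0$; since $\ell(\gamma')<16r_0\sqrt{k+\tfrac14}$ there are at most $8\sqrt{k+\tfrac14}$ such strands, and by convexity of the distance to $\delta$ each meets $\alpha'$ at most twice. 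This gives $i(\alpha',\gamma')\le 16\sqrt{k+\tfrac14}$ on the nose, with no off-by-one and no appeal to Lemma~\ref{lem:int}. The two ingredients you are missing are (i) enlarging $\CC$ so that strands are forced to be long, allowing a count by length rather than by the segments $c_j$, and (ii) controlling $D$ via a representative of $\gamma$ built from $\gamma'$ rather than by comparing segment intersections directly.
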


\begin{proof}
The segment $c_i$ contains a point of self-intersection $p$ and a geodesic simple loop based in $p$ as a subset.  This loop generates a cylinder $\CC$ of core geodesic $\delta$ (or possibly a cusp - in this case we set $\delta$ to be a small horocyclic neighborhood of the cusp disjoint and very far away from $c_i$). We observe that the parallel line $h_p$ to $\delta$ passing through $p$ is embedded in $X$ and moreover, the line parallel to $h_p$ consisting of points distance $r_0$ from $h$ is also embedded and is the boundary of an embedded cylinder. This is because otherwise there would be a point $p'$ with two geodesic loops of length at most $2 r_0$. As before, this would imply the existence of a figure eight geodesics of length strictly less than $C_8(X)$ which is not possible. 

We extend this cylinder maximally by boundary lines parallel to $\delta$ (both 'up' and  'down') and so that it remains embedded. The resulting cylinder we denote $\CC$ and we extend (if necessary) the segment $c_i$ so that both its endpoints lie on the other boundary curve of $\CC$ which we'll denote $\delta_{-}$. Note that $c_i$ is entirely contained in the half cylinder with boundary curves $\delta$ and $\delta_{-}$.

\begin{figure}[h]
{\color{linkred}
\leavevmode \SetLabels
\L(.525*.33) $h_p$\\%
\L(.465*.1) $c_i$\\%
\L(.425*.35) $p$\\%
\L(.398*.27) $r_0$\\
\L(.595*.97) $\delta$\\
\L(.49*-0.08) $\delta_{-}$\\
\endSetLabels
\begin{center}
\AffixLabels{\centerline{\epsfig{file =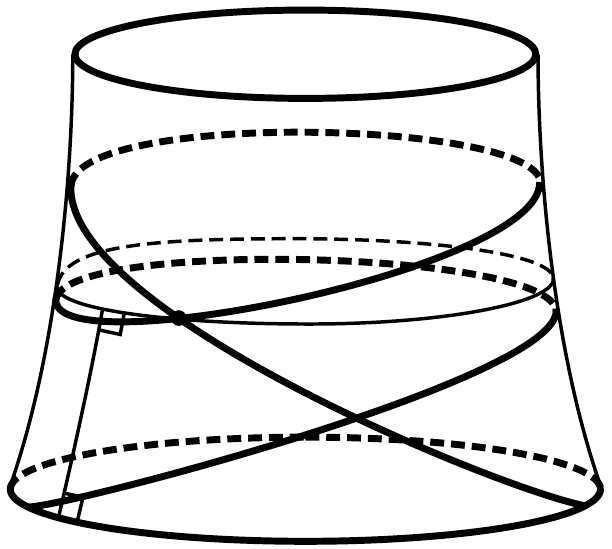,width=4.5cm,angle=0}}}
\vspace{-24pt}
\end{center}
}
\caption{The half cylinder containing $c_i$} \label{fig:height}
\end{figure}

An important feature of this cylinder, which we will need below, is the following: Any geodesic arc $a$ which essentially crosses $\CC$ and has endpoints on $\partial \CC$, has length at least $2 r_0$. 

To see this consider a point $q$ which is the base point of a simple geodesic loop of length at most $2 r_0$ ($p$ is such a point). By repeating the argument above, the parallel line $h_q$ to $\delta$ at the level of $q$ is embedded in $\CC$, as is the cylinder consisting of all points at distance at most $r_0$ from $h_q$. In particular, the width of $\CC$ is at least $2 r_0$. 

Now consider an essential arc $a$ on $\CC$. If it is simple and goes across the cylinder it has length at least the width of the cylinder, thus at least $2r_0$. If it is non-simple with both endpoints on $\delta_{-}$, then it must have a point at distance at least $r_0$ from $\delta_{-}$ and so it must be of length at least $2r_0$. 

Because $i(c_i,c_i) \geq 2$, we have $w(c_i) \geq 2$. It will be convenient to think of $\CC$ as the quotient of its universal cover $\tilde{\CC}$ by the standard action of $\Z$ and look at copies of $c_i$ in this "unwrapped" version of $\CC$ (see Figure \ref{fig:lift}). 

\begin{figure}[h]
{\color{linkred}
\leavevmode \SetLabels
\L(.62*.28) $\tilde{c}_i$\\%
\L(.62*.88) $\tilde{\delta}$\\
\L(.49*-0.1) $\tilde{\delta}_{-}$\\
\endSetLabels
\begin{center}
\AffixLabels{\centerline{\epsfig{file =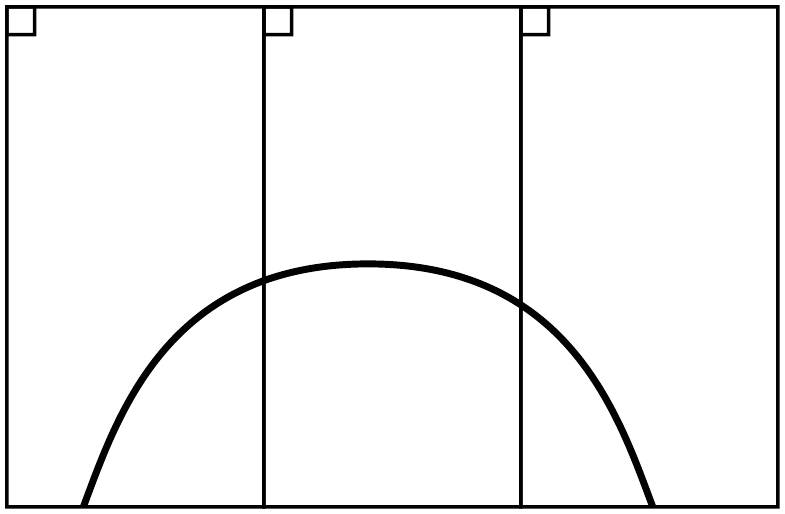,width=6.5cm,angle=0}}}
\vspace{-24pt}
\end{center}
}
\caption{A lift of the cylinder} \label{fig:lift}
\end{figure}

Let $c_i(t)$, $t\in [0,1]$ be a parametrization of $c_i$ and note that by standard hyperbolic geometry, the distance function $d_\CC(c_i(t), \delta)$ is strictly convex. (The function $d_\CC$ is the intrinsic distance function of $\CC$.)

Let $p$ be the closest self-intersection point of $c_i$ to $\delta$. It is the base point of a geodesic simple loop $\alpha$, which is a subset of $c_i$. We consider the closed geodesic $\gamma'$ in the homotopy class of the curve obtained from $\gamma$ by removing the loop $\alpha$ from $\gamma$. Note that necessarily $\ell(\gamma') < \ell(\gamma)$ and because of our choice of loop removal,  $\gamma'$ is not only non-trivial, we will be able to lower bound its self-intersection number. We begin by noting however that 
$$
i(\gamma',\gamma') \leq k-1
$$
otherwise $\gamma$ would not be minimal among elements of $\G_{\geq k}(X)$. 

To get a lower bound we will construct a representative of $\gamma$ from the geodesic $\gamma'$. Begin by observing that there is an arc of $\gamma'$ which lives on $\CC$ and which corresponds to the truncated strand $c_i$. 

\begin{figure}[h]
{\color{linkred}
\leavevmode \SetLabels
\L(.45*.45) $c'$\\%
\L(.49*.65) $p'$\\%
\L(.528*.505) $\alpha'$\\
\endSetLabels
\begin{center}
\AffixLabels{\centerline{\epsfig{file =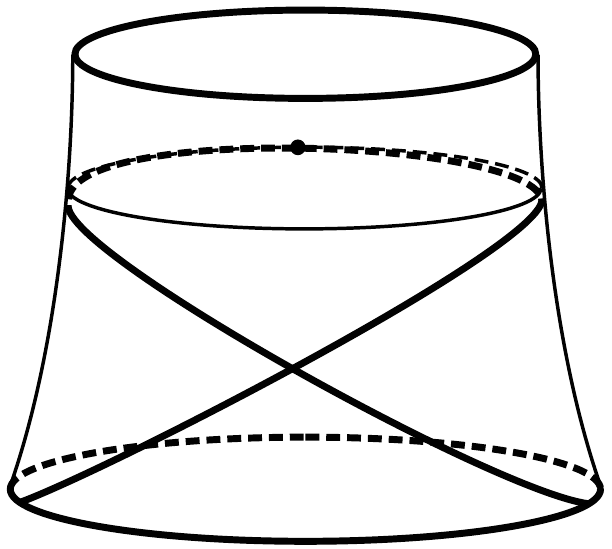,width=4.5cm,angle=0}}}
\vspace{-24pt}
\end{center}
}
\caption{The arcs $c'$ and $\alpha'$} \label{fig:height2}
\end{figure}

We'll denote it $c'$ and assume that it is oriented following some orientation of $\gamma'$. Consider its closest point $p'$ to $\delta$  and the loop $\alpha'$ formed by all points of $\CC$ of equal distance to $\delta$. Note that $\alpha'$ is freely homotopic to $\delta$ and thus to the loop $\alpha$ previously considered. We orient $\alpha'$ following the same orientation as $c'$. We consider the arc $c''$ obtained by following $c'$ from its orientation point until $p'$, then following $\alpha'$ and then continuing along $c'$. The important observation is that by replacing $c'$ with $c''$, we've recuperated the homotopy class of $\gamma$.

The number of self-intersection points of this representative of $\gamma$ is at least $i(\gamma,\gamma)$, but we'll be able to find an upper bound on this intersection number as well, which in turn will give us a bound on $i(\gamma,\gamma)$. 

We consider all the arcs of $\gamma'$ which are contained in the connected components of $\gamma' \cap \CC$ that might possibly intersect $\alpha'$. They must of course be essential strands that intersect $\CC$, and as observed above, must hence be of length at least $2r_0$. We can thus bound their number using our upper bound on the length of $\gamma'$. As
$$
\ell(\gamma') \leq 16 r_0 \sqrt{k+\frac{1}{4}}
$$
we have that the number of strands is at most 
$$
8 \sqrt{k+\frac{1}{4}}
$$
Because distance from points in $\CC$ to $\delta$ is strictly convex along parametrized geodesics, each strand can intersect $\alpha'$ at most twice. We thus have that 
$$
i(\alpha', \gamma') \leq 16  \sqrt{k+\frac{1}{4}}
$$
Therefore 
$$
i(\gamma,\gamma) \leq i(\gamma',\gamma') + \i(\alpha',\gamma') < k-1 + 16  \sqrt{k+\frac{1}{4}}
$$
as desired.
 \end{proof}

Observe that we can thus suppose in what follows that all of our segments are either simple or satisfy $i(c_i,c_i)=1$. A segment of the latter type we will call of $\alpha$-type, for obvious reasons. 

The same ``unwinding" technique from the proof of Lemma \ref{lem:unwind1} can be used to bound $i(\gamma,\gamma)$ when we have two simple arcs $c_i,c_j$ that intersect at least twice. First we need the following fact. 

\begin{lemma}
Suppose there exists a crossing strand $c_i$ lying in a cylinder $\CC$ with $\omega(c_i)>\frac{1}{2}$. Then 
$$i(\gamma,\gamma)\leq k-1+ 16  \sqrt{k+\frac{1}{4}}$$
\end{lemma}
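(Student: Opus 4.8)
The plan is to mimic the unwinding strategy of Lemma \ref{lem:unwind1}, replacing the removal of a self-loop by the removal of exactly one full turn of the crossing strand around the core $\delta$ of $\CC$. The threshold $\omega(c_i) > \tfrac{1}{2}$ is precisely what makes this Dehn-twist-type modification strictly length-decreasing, and this is where the specific constant enters.

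First I would set up the cylinder $\CC$ with core $\delta$ and boundary curves $\delta_\pm$, extended maximally so as to remain embedded, exactly as in Lemma \ref{lem:unwind1}; this guarantees the width property that every geodesic arc essentially crossing $\CC$ has length at least $2r_0$. I would then view $c_i$ as a geodesic arc from $\delta_-$ to $\delta_+$ of winding $\omega := \omega(c_i)$, noting that for a crossing strand the angular coordinate along $c_i$ is monotone, so the unsigned winding number coincides with the net winding.

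Next comes the key length comparison. I would replace the arc $c_i$, keeping its two endpoints fixed, by the geodesic arc $\bar c_i$ in $\CC$ of net winding $\omega - 1$; this is the class obtained from $c_i$ by undoing one loop around $\delta$. Since $|\omega - 1| = 1-\omega < \tfrac{1}{2} < \omega$, strict convexity of the distance-to-$\delta$ function along geodesics (equivalently, monotonicity of the length of a crossing arc in the absolute value of its winding) gives $\ell(\bar c_i) < \ell(c_i)$. Letting $\gamma'$ be the closed geodesic homotopic to $\gamma$ with $c_i$ replaced by $\bar c_i$, we obtain $\ell(\gamma') < \ell(\gamma)$, and hence by minimality of $\gamma$ among $\G_{\geq k}(X)$ we get $i(\gamma',\gamma') \leq k-1$. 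I expect this step to be the main obstacle, since making precise that undoing exactly one turn strictly shortens the arc is where the value $\tfrac{1}{2}$ is used and requires the hyperbolic convexity estimate rather than a purely topological argument.

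Finally, I would reconstruct $\gamma$ from $\gamma'$ by reinserting one loop around $\delta$: taking the equidistant loop $\alpha'$ through the point of the relevant arc of $\gamma'$ closest to $\delta$ (freely homotopic to $\delta$), the representative of $\gamma$ obtained by splicing $\alpha'$ into $\gamma'$ yields $i(\gamma,\gamma) \leq i(\gamma',\gamma') + i(\alpha',\gamma')$, since $\alpha'$ is simple and meets the spliced arc only tangentially at its closest point. To bound $i(\alpha',\gamma')$ I would reuse verbatim the strand count of Lemma \ref{lem:unwind1}: from $\ell(\gamma') \leq 16 r_0 \sqrt{k+\tfrac{1}{4}}$ together with the width bound $2r_0$, at most $8\sqrt{k+\tfrac{1}{4}}$ essential strands of $\gamma'$ can meet $\alpha'$, and each meets it at most twice by convexity, so $i(\alpha',\gamma') \leq 16\sqrt{k+\tfrac{1}{4}}$. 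Combining the two estimates gives $i(\gamma,\gamma) \leq k-1 + 16\sqrt{k+\tfrac{1}{4}}$, as claimed.
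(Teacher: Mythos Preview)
Your proposal is correct and follows essentially the same route as the paper: extend $\CC$ maximally, undo one Dehn twist along $\delta$ to obtain a strictly shorter $\gamma'$ (using $\omega(c_i)>\tfrac12$), deduce $i(\gamma',\gamma')\le k-1$, and then rebuild a representative of $\gamma$ by splicing in a loop around the core and bound the added intersections by the $2r_0$ strand count. The only cosmetic differences are that the paper inserts the core geodesic $\delta$ itself at the point where the crossing strand $c'\subset\gamma'$ meets $\delta$ (which coincides with your equidistant loop since $c'$ is crossing), and that your equation $|\omega-1|=1-\omega$ tacitly assumes $\omega\le 1$; your monotonicity-in-$|\text{winding}|$ argument already covers $\omega>1$ as well, so this is harmless.
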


\begin{proof}
Suppose $c_i$ lies in the cylinder $\CC$ with core curve $\delta$. We extend the cylinder maximally in parallel directions so that it remains embedded to obtain cylinder $C'$, still with core curve $\delta$. Note that the winding number of the corresponding strand $c_i$ still satisfies $\omega(c_i)>\frac{1}{2}$ with respect to $C'$. Also, by the same argument as in the proof of Lemma \ref{lem:unwind1} any geodesic arc that essentially crosses $C$ and has endpoints on $\partial C$ has length at least $2r_0$ and hence there are at most 
$$8\sqrt{k+\frac{1}{4}}$$ 
such strands. 

We now unwind $c_i$ once (by applying a single Dehn twist around $\delta$ to $c_i$, in such a way that the winding number of $c_i$ decreases). Let $\gamma'$ be the geodesic representative in the homotopy class of the resulting curve. Since $\omega(c_i)>1/2$ it follows that $\ell(\gamma')<\ell(\gamma)$ and hence, by the definition of $\gamma$, $i(\gamma',\gamma')\leq k-1$. 

\begin{figure}[h]
{\color{linkred}
\leavevmode \SetLabels
\L(.33*.78) $c_i$\\%
\L(.747*.78) $c'$\\
\endSetLabels
\begin{center}
\AffixLabels{\centerline{\epsfig{file =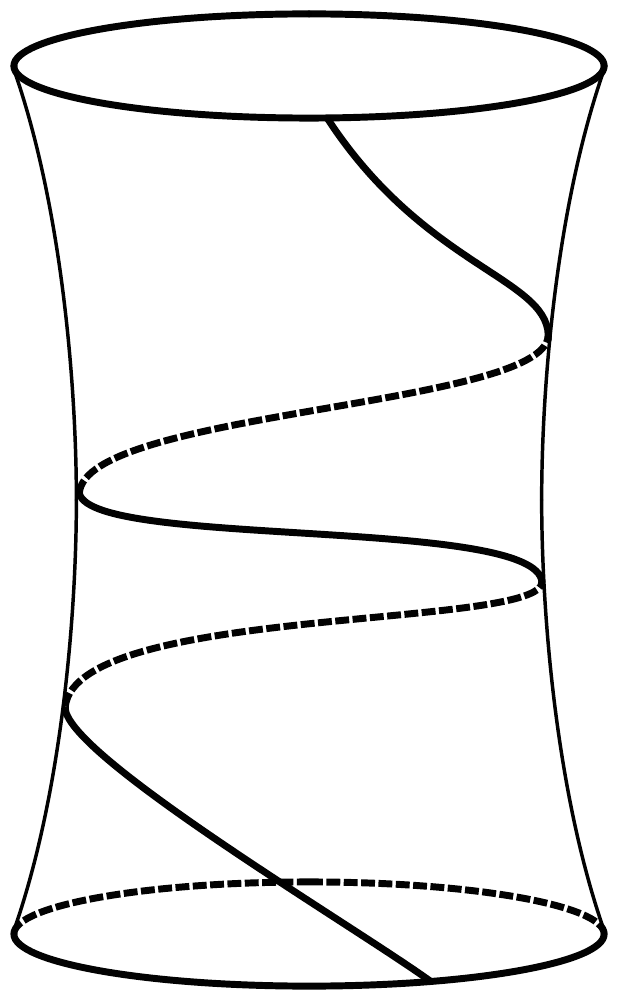,width=2.8cm,angle=0}\hspace{100pt}\epsfig{file =Crossing.pdf,width=2.8cm,angle=0}}}
\vspace{-24pt}
\end{center}
}
\caption{Unwinding a crossing strand} \label{fig:strands}
\end{figure}

We proceed in a manner similar to the proof of Lemma \ref{lem:unwind1}: we will reconstruct a representative of $\gamma$ from $\gamma'$ and use it to bound the self-intersection number of $\gamma$. Note that there is a strand $c'$ in a component of $\gamma'\cap\CC$ corresponding to $c_i$. Let $p$ be the intersection point between $c'$ and $\delta$. We choose some orientation of $\gamma'$ and orient $\delta$ in the 'winding' direction. Consider the arc $c''$ obtained by following $c'$ from one of its endpoints until $p$, then the loop $\delta$, and then continuing along $c'$ to its other endpoint. Let $\gamma''$ be the curve obtained from $\gamma'$ by replacing $c'$ with $c''$. Clearly $\gamma''$ is homotopic to $\gamma$ and hence $i(\gamma,\gamma)\leq i(\gamma'',\gamma'')$. By the exact same argument as in Lemma \ref{lem:unwind1} we have
$$i(\gamma,\gamma)\leq i(\gamma',\gamma')+i(\alpha,\gamma')\leq k-1+16\sqrt{k+\frac{1}{4}}$$
as desired. \end{proof}

If $c_i$ is a crossing strand in some cylinder $C$ with $\omega(c_i)\leq1/2$, it follows from Lemma \ref{lem:int} that it can intersect any other simple segment at most once. Hence we have: 

\begin{corollary}\label{cor:unwind2}
If there exists crossing strands $c_i,c_j$ with $i(c_i,c_j) \geq 2$, then 
$$i(\gamma,\gamma) \leq  k-1 + 16  \sqrt{k+\frac{1}{4}}$$\qed
\end{corollary}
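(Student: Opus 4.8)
The plan is to reduce this to the previous lemma, which already yields exactly the same bound as soon as \emph{some} crossing strand has winding number exceeding $\tfrac{1}{2}$. So the only thing that needs checking is that the hypothesis $i(c_i,c_j)\geq 2$ forces at least one of the two crossing strands to wind more than half a turn around its cylinder.

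First I would record that, by definition, a crossing strand has its two endpoints on opposite boundary components of its cylinder and is therefore a simple geodesic segment. Since $c_i$ and $c_j$ intersect, the disk-or-cylinder dichotomy coming from Corollary \ref{cor:balls} guarantees that they lie in a common embedded cylinder $\CC$; in particular their winding numbers are both measured with respect to $\CC$, and Lemma \ref{lem:int} applies to the pair. Now I would invoke the observation recorded just before the statement: a crossing strand with winding number at most $\tfrac{1}{2}$ meets any other simple segment at most once. If both $c_i$ and $c_j$ had winding number at most $\tfrac{1}{2}$, then applying this observation to $c_i$ against the simple segment $c_j$ would give $i(c_i,c_j)\leq 1$, contradicting $i(c_i,c_j)\geq 2$. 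Hence at least one of them, say $c_i$, satisfies $\omega(c_i)>\tfrac{1}{2}$.

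With this in hand, applying the previous lemma to the crossing strand $c_i$ (which lies in the cylinder $\CC$ with $\omega(c_i)>\tfrac{1}{2}$) yields precisely $i(\gamma,\gamma)\leq k-1+16\sqrt{k+\tfrac{1}{4}}$, as claimed. The argument is short, and I expect no genuine obstacle: the single point requiring care is to confirm that $c_i$ and $c_j$ really do share a single cylinder, so that their winding numbers are comparable and part (4) of Lemma \ref{lem:int} (or equivalently the pre-corollary observation) is legitimately applicable to the pair — but this is exactly what Corollary \ref{cor:balls} provides for any intersecting pair of length-$r_0$ segments.
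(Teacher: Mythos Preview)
Your proposal is correct and follows precisely the paper's own route: the corollary is marked \qed because its proof is just the sentence preceding it, namely that a crossing strand with winding number at most $\tfrac{1}{2}$ meets any other simple segment at most once (via Lemma~\ref{lem:int}(4)), so $i(c_i,c_j)\geq 2$ forces a crossing strand of winding number exceeding $\tfrac{1}{2}$, to which the previous lemma applies. Your added remark that Corollary~\ref{cor:balls} puts the two intersecting segments in a common cylinder is already part of the paper's standing setup for the $c_i$'s, so you are simply making that explicit.
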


  \subsection{$\alpha$-type segments and final estimates}
 
 We now place ourself in the situation where all of our segments are either simple or of $\alpha$-type. Furthermore, by Corollary \ref{cor:unwind2}, we can suppose that any two simple segments intersect at most once. 

We begin with a lemma about how an $\alpha$-type segment can intersect another segment:
 
 \begin{lemma}
 Let $c_i,c_j$ be two of our segments and suppose that $c_i$ is of $\alpha$-type. Then 
 $$
 i(c_i,c_j) \leq 4
 $$
 \end{lemma}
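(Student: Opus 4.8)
The plan is to localise all intersections of $c_i$ with $c_j$ inside a single embedded cylinder and then read off the bound from Lemma \ref{lem:int}, once the winding number of $c_i$ is under control. If $c_i$ and $c_j$ are disjoint there is nothing to prove, so fix a common point $p\in c_i\cap c_j$. Since each segment has length at most $r_0$ and passes through $p$, both $c_i$ and $c_j$ lie in $\overline{B_{r_0}(p)}$, which by Corollary \ref{cor:balls} (recall $r_0=C_8(X)/8$) is a disk or a cylinder. As $c_i$ is of $\alpha$-type it is not simple, so this neighbourhood cannot be a disk; hence $c_i$ and $c_j$ both lie in an embedded cylinder $\CC$ with a common core curve, and after extending them to strands as usual we may speak of their winding numbers and apply Lemma \ref{lem:int}.

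The key quantitative input is the estimate $\lceil\omega(c_i)\rceil\le 2$. Here $c_i$ is a returning strand in $\CC$ with exactly one self-intersection point. The point is that the reverse of the inequality $i(c_i,c_i)\le\lceil\omega(c_i)\rceil$ also holds up to rounding: a returning strand whose winding number exceeds $2$ must cross itself at least twice. This follows from the same universal-cover and strict-convexity analysis used in the proof of Lemma \ref{lem:unwind1}, where a large winding number was seen to force self-intersections; concretely, lifting $c_i$ to $\widetilde\CC$ and counting its crossings with the translates of itself under the deck transformation shows that winding $>2$ produces two distinct self-crossings. Since $i(c_i,c_i)=1$, we conclude $\omega(c_i)\le 2$, hence $\lceil\omega(c_i)\rceil\le 2$.

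To finish I would split on the type of $c_j$ in $\CC$. If $c_j$ is a crossing strand, part (1) of Lemma \ref{lem:int} gives $i(c_i,c_j)\le\lceil\omega(c_i)\rceil\le 2$. If $c_j$ is a returning strand, part (3) of Lemma \ref{lem:int} gives $i(c_i,c_j)\le 2\lceil\omega(r)\rceil$, where $r$ is whichever of $c_i,c_j$ has the smaller winding number. If $r=c_i$ this is at most $2\cdot 2=4$; if $r=c_j$ then $\omega(c_j)\le\omega(c_i)\le 2$, so again $i(c_i,c_j)\le 4$. In every case $i(c_i,c_j)\le 4$, as desired; note that the returning–returning case is exactly the one that produces the value $4$.

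I expect the main obstacle to be precisely the winding estimate $\lceil\omega(c_i)\rceil\le 2$, since Lemma \ref{lem:int} only supplies the upper bound $i(r,r)\le\lceil\omega(r)\rceil$, whereas here I need the opposite implication that sufficient winding forces at least two self-crossings. Establishing this cleanly requires the universal-cover picture of $\CC$ together with the strict convexity of the distance-to-core function along geodesics — the very ingredients already assembled for Lemma \ref{lem:unwind1} — so the work is in transcribing that analysis into a lower bound on self-intersections rather than a count of crossings with $\alpha'$.
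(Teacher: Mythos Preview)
Your proof is correct and follows essentially the same approach as the paper: bound $\omega(c_i)\le 2$ from the fact that $c_i$ has exactly one self-intersection, then read off the intersection bound from Lemma~\ref{lem:int}. The paper's proof is a two-line version of yours; in particular it simply asserts $\omega(c_i)\le 2$ and the applicability of Lemma~\ref{lem:int}, whereas you spell out the localisation of both segments in a common embedded cylinder via $B_{r_0}(p)$ and flag the reverse winding--intersection implication as the substantive point, which is a fair reading of what is being used.
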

 
 \begin{proof}
 Since $c_i$ is $\alpha$-type we must have $\omega(c_i)\leq2$ (with respect to the cylinder for which it is $\alpha$-type). It follows from Lemma \ref{lem:int} that $i(c_i,c_j)\leq2$ if $c_j$ is simple and $i(c_i,c_j)\leq4$ if $c_j$ is $\alpha$-type. 
 \end{proof}
 
 We can now bound the intersection number of $\gamma$. Recall that the only cases left to consider are when $i(c_i,c_i)\leq 1$ and $i(c_i,c_j) \leq 4$ for all $i, j$. Hence we have: 
$$
i(\gamma,\gamma) \leq \frac{1}{2} \sum_{i,j=1}^m i(c_i,c_j) - \sum_{l=1}^m i(c_l,c_l) \leq 2 m^2 - m \leq 32 \sqrt{k+\frac{1}{4}} ( 16 \sqrt{k+\frac{1}{4}} +1)
$$
which proves the theorem. 

\section{Intersections in the thick part and surfaces with cusps}\label{sec:cusp}

The main goal of this section is to prove Theorem \ref{thm:cuspcase} but before doing so we study thick-thin decompositions of surfaces. 

\subsection{Thick parts of closed curves}

Given a hyperbolic surface $X$ and fixed $\varepsilon > 0$, we define the $\varepsilon$-thick part $X_T$ of $X$ to be the subset of $X$ consisting of points with injectivity radius at least $\varepsilon$. The $\varepsilon$-thin part $X_t$ is the subset of $X$ with injectivity radius at most $\varepsilon$. Now given a curve $\gamma \subset X$, we can decompose it into $\gamma_T := X_T \cap \gamma$ and $\gamma_t := X_t \cap \gamma$.

Note that $\gamma$ might go in and out of the thick part, so $\gamma_T$ is not necessarily the continuous image of an interval. Nonetheless $\gamma_T$ can be broken into arcs that are continuous images of intervals with endpoints lying on the boundary of the thick part and we will denote these components by $\gamma_1, \hdots, \gamma_r$. Our first observation is that, provided $\varepsilon$ is small enough, each $\gamma_i$ has a certain length.

\begin{lemma}
If $\varepsilon \leq \frac{1}{2}$, then 
$$
\ell(\gamma_i) \geq \frac{3}{4}
$$
for $i= 1,\hdots,r$
\end{lemma}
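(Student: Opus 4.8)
The plan is to use the standard thick--thin structure together with the collar lemma. First I would note that for $\varepsilon\le\frac12$ the quantity $2\varepsilon\le 1$ is below the Margulis constant, so every component of $X_t$ is a standard piece: either an embedded collar around a simple closed geodesic $\delta$ of length $\ell\le 2\varepsilon$, or an embedded horocyclic neighbourhood of a cusp, and distinct such pieces are disjoint. By construction each endpoint of $\gamma_i$ lies on $\partial X_T$, i.e. on a boundary curve of one of these thin components, while $\gamma_i$ itself lies in $X_T$, disjoint from the open thin part. So each $\gamma_i$ is a geodesic arc running from the boundary of one thin component, through the thick part, to the boundary of a thin component.

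The key point is that each thin component sits strictly inside a larger embedded neighbourhood, leaving a definite ``buffer'' of thick part that $\gamma_i$ must traverse. For a collar this is the maximal collar of half-width $w(\ell)=\arcsinh\big(1/\sinh(\ell/2)\big)$ supplied by the collar lemma, whereas the boundary of the $\varepsilon$-thin part sits at radial distance $\rho_\varepsilon(\ell)=\arccosh\big(\sinh\varepsilon/\sinh(\ell/2)\big)$ from $\delta$; for a cusp one compares the $\varepsilon$-horocycle of length $2\sinh\varepsilon$ with the always-embedded horocycle of length $2$. A direct hyperbolic trigonometry estimate then shows that the radial buffer width, $w(\ell)-\rho_\varepsilon(\ell)$ in the collar case and $\ln(1/\sinh\varepsilon)$ in the cusp case, is at least $3/8$ whenever $\varepsilon\le\frac12$ (in fact at least $\ln(1/\sinh\tfrac12)\approx 0.65$). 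Since the distance to $\delta$ (or to the cusp) satisfies the triangle inequality, any sub-arc from a point of $\partial X_T$ to the outer boundary of its enlarged neighbourhood has length at least this buffer width.

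To conclude I would split into two cases. If the two endpoints of $\gamma_i$ lie on different thin components $T_1,T_2$, then their enlarged neighbourhoods $M_1,M_2$ are disjoint by the collar lemma; hence the initial sub-arc of $\gamma_i$ lying in $M_1$ and the final sub-arc lying in $M_2$ are disjoint sub-arcs of $\gamma_i$, each of length at least $3/8$, so $\ell(\gamma_i)\ge 3/8+3/8=3/4$. If both endpoints lie on the same component, I would instead use that the $\varepsilon$-thin region is convex: a geodesic arc lying outside a convex region with both endpoints on its boundary cannot be the short arc between them (that arc would enter the region), so $\gamma_i$ must either exit $M$ and return or wind around the core, in either case yielding length at least $3/4$.

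The main obstacle is the explicit trigonometric estimate of the buffer width together with the bookkeeping ensuring $\gamma_i$ genuinely accrues this length: the embeddedness statements (the collar lemma, and the fact that the length-$2$ horocycle about a cusp is embedded), the disjointness of the enlarged neighbourhoods, and the convexity argument handling the same-component case. I would also remark that $3/4$ is only a convenient constant here, the argument in fact giving a bound close to $1.3$.
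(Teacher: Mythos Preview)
Your approach works and is genuinely different from the paper's. The paper does not invoke the collar lemma or any thick--thin geometry at all: it takes the short loops $\alpha,\beta$ of length $2\varepsilon$ based at the endpoints $p,q$ of $\gamma_i$, gives them opposite orientations, and observes that the geodesic $\delta$ in the free homotopy class of $\alpha*\gamma_i*\beta*\gamma_i^{-1}$ is non-simple, hence $\ell(\delta)\ge 4\log(1+\sqrt2)$; since $\ell(\delta)<\ell(\alpha)+\ell(\beta)+2\ell(\gamma_i)\le 2+2\ell(\gamma_i)$, the bound drops out in one line with no case split and no hyperbolic trigonometry. Your argument trades this slickness for explicit collar/cusp geometry and a buffer-width computation; the payoff is the sharper constant you mention. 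One refinement for your same-component case: the ``wind around the core'' alternative never actually occurs. Convexity of the thin region (a tubular neighbourhood of a geodesic, or a horodisk) means that in the universal cover any two lifted endpoints on the same copy of $\partial T$ are joined by a geodesic that falls to the convex side; hence a geodesic arc with both endpoints on $\partial T$ that avoids $\mathrm{int}(T)$ cannot remain inside $M$ at all --- it must exit through $\partial M$ and return, and your $2\times(\text{buffer width})$ estimate then applies uniformly without a separate winding case.
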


\begin{proof}
The boundary of $X_T$ consists of points of injectivity radius exactly $\varepsilon$ and, in particular, for any point of the boundary there is a simple geodesic loop of length $2 \varepsilon$ based in that point. Suppose that $\gamma_i$ joins points $p,q$ on the boundary of $X_T$ and denote by $\alpha$ and $\beta$ the simple loops of length $2 \varepsilon$ based in $p$ and $q$, respectively. Note that $\alpha$ and $\beta$ are either disjoint or freely homotopic. We orient $\gamma_i$, $\alpha$ and $\beta$ such that $\alpha$ and $\beta$ have opposite orientations. We now obtain a homotopy class of curve given by the concatenation
$$
\alpha * \gamma_i *\beta *\gamma_i
$$
The main observation is that the geodesic $\delta$ in the homotopy class of the above concatenation is a non-simple closed geodesic and thus has length at least $4 \log(1+\sqrt{2})$ (see for instance \cite{BuserBook}). Now as $\ell(\alpha) + \ell(\beta) + 2 \ell(\gamma_i)$ is a strict upper bound for $\ell(\delta)$, we have the inequality 
$$
2 \ell(\gamma_i) > 4 \log(1+\sqrt{2}) - 2 > \frac{3}{2}
$$
and the result follows.
\end{proof}
The constants in the above proof are clearly not optimal, and the choice of $\varepsilon \leq \frac{1}{2}$ is somewhat arbitrary. 

We now turn our attention to finding a lower bound on $\ell(\gamma_T)$ in terms of $i(\gamma_T, \gamma_T)$, proving Theorem \ref{thm:thick} of the introduction which gives a lower bound on length in terms of intersection number. 

\begin{proof}[Proof of Theorem \ref{thm:thick}]
We begin by considering a set of points $\{p_j\}_{j\in I}$ which form an $\varepsilon$-net for $X_T$ ($I$ is just an index set). Specifically, the points all belong to $X_T$, are pairwise at least distance $\varepsilon$ apart and are maximal for inclusion. In particular, any $x\in X_T$ is distance at most $\varepsilon$ from at least one $p_j$. As such we can consider the Voronoi cells $\{V_j \}_{j\in I}$ around each of the $p_i$. As $\varepsilon \leq \frac{1}{2}$, each of the Voronoi cells are (topological) disks. 

The intersection between $\gamma_T$ and any Voronoi cell $V_j$ is a collection of simple geodesic segments each of length at most $2 \varepsilon$. As $\gamma_T$ is of finite length, we can decompose $\gamma_T$ into these simple geodesic segments that traverse Voronoi cells. Denote them by $c_1, \hdots, c_m$. We note that an immediate upper bound on $i(\gamma_T, \gamma_T)$ is given by
$$
\frac{m(m-1)}{2}
$$
as any two of these segments can intersect at most once. We'll now proceed to bound $m$ in terms of $\ell(\gamma_T)$.

Recall our notation of $\gamma_1, \hdots, \gamma_r$ for the components of $\gamma_T$. By the previous lemma, we have $\ell(\gamma_i) \geq \frac{3}{4}$.

Consider the arc $\gamma_i$  consisting of multiple  $c_j$'s, the intersections with the Voronoi cells. We suppose the number of them is $m_i$ and we have
$$
\sum_{i=1}^{r} m_i = m
$$
We will now bound $m_i$ in terms of $\ell(\gamma_i)$. To do so we lift to the universal cover and consider the set of lifts of centers of Voronoi cells encountered by $\gamma_i$. We denote by $\tilde{\gamma}_i$ the lift of $\gamma_i$ and by $q_1,\hdots, q_{m_i}$ the lifts of the centers of the Voronoi cells. Note that the (open) balls of radius $\frac{\varepsilon}{2}$ around each $q_j$ are all pairwise disjoint. These balls are also all contained in the $\frac{3}{2} \varepsilon$ neighborhood of $\tilde{\gamma}_i$. The area of this neighborhood is obtained by computing the area of a strip of width $\frac{3}{2} \varepsilon$ around $\tilde{\gamma}_i$ and by adding the area of a ball of radius $ \frac{3}{2} \varepsilon$ for each of the two endpoints of $\tilde\gamma_i$. The resulting area is 
$$
A:= 2\left(\ell(\gamma_i) \sinh\sfrac{3\varepsilon}{2} + \pi(\cosh \sfrac{3\varepsilon}{2}-1)\right)
$$
In comparison, the total area of the balls of radius $\frac{\varepsilon}{2}$ around each $q_j$ is
$$
B:=m_i 2\pi (\cosh \sfrac{\varepsilon}{2}-1)
$$
and as $B < A$ we can deduce that 
$$
m_i < \frac{\ell(\gamma_i) \sinh\sfrac{3\varepsilon}{2} + \pi(\cosh \sfrac{3\varepsilon}{2}-1)}{\pi (\cosh \sfrac{\varepsilon}{2}-1)}
$$
We are not trying to optimize the constants we obtain, so we will simplify the above expression somewhat. Seen as a linear function in $\ell(\gamma_i)$, the leading coefficient can be bounded by
$$
\frac{\sinh\sfrac{3\varepsilon}{2} }{\pi (\cosh \sfrac{\varepsilon}{2}-1)}< \frac{5}{\varepsilon}
$$
as $\varepsilon \leq \frac{1}{2}$. The second coefficient is strictly increasing in $\varepsilon$ so, again using $\varepsilon \leq \frac{1}{2}$, we bound it by $10$. We thus have
$$
m_i < \frac{5}{\varepsilon} \ell(\gamma_i) + 10 < \frac{5}{\varepsilon} (\ell(\gamma_i)+1)
$$
Using the fact that $\ell(\gamma_i) > \frac{3}{4}$, this implies the following (highly non-optimal) inequality:
$$
m_i < \frac{12}{\varepsilon} \ell(\gamma_i)
$$
We now return to $\gamma_T$ and $m$. 

We have
\begin{eqnarray*}
i(\gamma_T,\gamma_T) & \leq & 
\frac{1}{2} m (m-1)\\
& =&  \frac{1}{2} \sum_{i=1}^r m_i \left(\sum_{i=1}^r m_i -1\right)\\
& < &\left( \frac{12}{\varepsilon}\right)^2 \left(\sum_{i=1}^r \ell(\gamma_i)\right)^2 = \left( \frac{12}{\varepsilon}\right)^2 \left(\ell(\gamma_T)\right)^2
\end{eqnarray*}
and thus
$$
\ell(\gamma_T) > \frac{\varepsilon \sqrt{i(\gamma_T,\gamma_T)}}{12}
$$
as desired.
\end{proof}

Note that if $X$ is closed, setting $\varepsilon := \min \{\frac{1}{2}, \frac{\sys(X)}{2}\}$ where $\sys(X)$ is the systole length of $X$, then $X=X_T$. In particular $\gamma$ is entirely contained in the thick part of $X$ and we have a lower bound on its length that grows like the root of its intersection. This is exactly the statement of Theorem 1.1 in \cite{BasmajianUniversal}. In what follows, we will need to apply our estimate to surfaces with cusps.

\subsection{Surfaces with cusps}

Armed with Theorem \ref{thm:thick} and using Basmajian's upper bounds on length for surfaces with cusps \cite{BasmajianUniversal}, we can now prove Theorem \ref{thm:cuspcase}. 

\begin{proof}[Proof of Theorem \ref{thm:cuspcase}]
Let $X$ be a complete hyperbolic surface with at least one cusp. If $\gamma$ is a closed geodesic on $X$ with at least $k\geq2$ self-intersections, it is a result by Basmajian \cite[Corollary 1.3]{BasmajianUniversal} that there exists a constant $C=C(k,X)$ such that $\ell(\gamma)<C$. In fact, $C=2\sinh^{-1}{(k)}+d_X+1$ where $d_X$ is the shortest orthogonal distance from the length 1 horosphere boundary of a cusp to itself. Note that $\sinh^{-1}(k)$ is comparable to $\log{(k)}$, and therefore so is $C(k,X)$. 

Let $\varepsilon'=\frac{1}{4}$ and let $s$ be the systole length of the $\varepsilon'$-thick part of $X$. Note that $\frac{1}{4}<\cosh^{-1}\left({\frac{\sqrt{11}}{3}}\right)$ which is the injectivity  radius of a cusp with boundary horosphere of length $\frac{2}{3}$. 

Now, let $\varepsilon=\min\left\{\frac{1}{4}, \frac{s}{2}\right\}$. Choose $K\geq2$ such that $C(k,X)<\frac{\varepsilon}{12}\sqrt{k}$ for all $k>K$. Let $k>K$ and $\gamma$ a shortest geodesic on $X$ with at least $k$ self-intersections. By Theorem \ref{thm:thick} $\gamma$ must intersect $X_t$, the $\varepsilon$-thin part of $X$. By the choice of $\varepsilon$, $\gamma$ must enter a cusp of $X$, and in fact a cusp neighborhood with boundary horosphere $\delta$ of length $\frac{2}{3}$.  This implies that $\gamma_t$ contains a strand $c$ (a continuous image of an interval with endpoints lying on the boundary horosphere) that intersects itself at least 3 times. We use a similar unwinding argument as in Lemma \ref{lem:unwind1} to get a bound on the intersection number of $\gamma$. Let $p$ be the self-intersection point of $c$ furthest away from $\delta$. It is the base point of a geodesic loop $\alpha$. Remove this loop from $\gamma$ and consider the resulting geodesic $\gamma'$. Clearly $\ell(\gamma')<\ell(\gamma)$ and hence, by definition of $\gamma$, $i(\gamma',\gamma')\leq k-1$. 

Let $c'$ be the strand of $\gamma'$ corresponding to the truncated strand $c$. Note that $c'$ self-intersects at least twice, and hence enters the cusp neighborhood (of the same cusp as $c$) with boundary horosphere $\delta'$ of length 1. Pick a point $p'$ on $c'$ in this cusp neighborhood and consider the simple loop $\alpha'$ based at this point consisting of all points equidistant from $\delta'$. As in Lemma \ref{lem:unwind1}, let $c''$ be the arc obtained by concatenating $c'$ and $\alpha'$ and let $\gamma''$ be the curve obtained by replacing $c'$ with $c''$ in $\gamma'$, and note that $\gamma''$ is homotopic to $\gamma$. Hence
$$i(\gamma,\gamma)\leq i(\gamma'',\gamma'')=i(\gamma',\gamma')+i(\alpha',\gamma').$$

To estimate $i(\alpha',\gamma')$ note that it is bounded from above by twice the number of strands of $\gamma'$ that enters the cusp neighborhood with boundary horosphere of length 1 (since each such strand can intersect $\alpha'$ at most twice). Each such strand has to pass through the cylinder of width $\log (2)$ in the cusp bounded by the horospheres of length 2 and 1, and then return. Hence each strand has length at least $2\log (2)$ and since $\ell(\gamma')<C(k,X)$ there are less than $C(k,X)/(2\log (2))$ such strands, and $i(\alpha',\gamma')<C(k,X)/\log (2)$.  Therefore, 
$$ i(\gamma,\gamma)<k-1+ \frac{C(k,X)}{\log (2)}$$
and, as noted above, $C(k,X)$ is comparable to $\log (k)$, proving the theorem.
\end{proof}

\addcontentsline{toc}{section}{References}
\bibliographystyle{plain}

\begin{thebibliography}{10}

\bibitem{AGPS}
Tarik Aougab, Jonah Gaster, Priyam Patel, and Jenya Sapir.
\newblock Building hyperbolic metrics suited to closed curves and applications
  to lifting simply.
\newblock {\em Preprint}, 2016.

\bibitem{BalacheffParlier}
Florent Balacheff and Hugo Parlier.
\newblock Bers' constants for punctured spheres and hyperelliptic surfaces.
\newblock {\em J. Topol. Anal.}, 4(3):271--296, 2012.

\bibitem{BasmajianStable}
Ara Basmajian.
\newblock The stable neighborhood theorem and lengths of closed geodesics.
\newblock {\em Proc. Amer. Math. Soc.}, 119(1):217--224, 1993.

\bibitem{BasmajianUniversal}
Ara Basmajian.
\newblock Universal length bounds for non-simple closed geodesics on hyperbolic
  surfaces.
\newblock {\em J. Topol.}, 6(2):513--524, 2013.

\bibitem{BasmajianShort}
Ara Basmajian.
\newblock Short geodesics on a hyperbolic surface.
\newblock In {\em Recent advances in mathematics}, volume~21 of {\em Ramanujan
  Math. Soc. Lect. Notes Ser.}, pages 39--43. Ramanujan Math. Soc., Mysore,
  2015.

\bibitem{BasmajianKim}
Ara Basmajian and Youngju Kim.
\newblock Geometrically infinite surfaces with discrete length spectra.
\newblock {\em Geom. Dedicata}, 137:219--240, 2008.

\bibitem{Bers}
Lipman Bers.
\newblock An inequality for {R}iemann surfaces.
\newblock In {\em Differential geometry and complex analysis}, pages 87--93.
  Springer, Berlin, 1985.

\bibitem{BuserBook}
Peter Buser.
\newblock {\em Geometry and spectra of compact {R}iemann surfaces}, volume 106
  of {\em Progress in Mathematics}.
\newblock Birkh\"auser Boston, Inc., Boston, MA, 1992.

\bibitem{ChasRelations}
Moira Chas.
\newblock Relations between word length, hyperbolic length and
  self-intersection number of curves on surfaces.
\newblock In {\em Recent advances in mathematics}, volume~21 of {\em Ramanujan
  Math. Soc. Lect. Notes Ser.}, pages 45--75. Ramanujan Math. Soc., Mysore,
  2015.

\bibitem{ChasPhillips}
Moira Chas and Anthony Phillips.
\newblock Self-intersection numbers of curves on the punctured torus.
\newblock {\em Experiment. Math.}, 19(2):129--148, 2010.

\bibitem{ErlandssonSouto}
Viveka Erlandsson and Juan Souto.
\newblock Counting curves in hyperbolic surfaces.
\newblock {\em Geom. Funct. Anal.}, 26(3):729--777, 2016.

\bibitem{ParlierShort}
Hugo Parlier.
\newblock A short note on short pants.
\newblock {\em Canad. Math. Bull.}, 57(4):870--876, 2014.

\bibitem{SapirBounds}
Jenya Sapir.
\newblock Bounds on the number of non-simple closed geodesics on a surface.
\newblock {\em Int. Math. Res. Not. IMRN}, to appear.

\bibitem{SapirLower}
Jenya Sapir.
\newblock Lower bound on the number of non-simple closed geodesics on surfaces.
\newblock {\em Geom. Dedicata}, to appear.

\end{thebibliography}

\end{document}